\documentclass[11pt,a4paper]{amsart}
\usepackage[foot]{amsaddr}

\makeatletter
\usepackage[english]{babel}
\usepackage[utf8]{inputenc}
\usepackage{csquotes}
\usepackage[T1]{fontenc}
\usepackage{latexsym}
\usepackage[leqno]{amsmath}
\usepackage{amssymb,amsthm,amsfonts}
\usepackage{mathtools}
\usepackage{mathrsfs}
\usepackage{dsfont}
\usepackage{stmaryrd}
\usepackage{comment}
\usepackage{todonotes}

\usepackage[normalem]{ulem}
\usepackage{cancel}

\usepackage{geometry}
\usepackage{nicefrac}

\usepackage{enumerate}
\usepackage[shortlabels]{enumitem}
\setlist[enumerate]{label=(\alph*),font=\normalshape}
\setlist[itemize]{font=\normalshape}
\let\originalitem\item
\renewcommand{\item}[1][]{%
	\if\relax\detokenize{#1}\relax%
		\originalitem%
	\else%
		\originalitem[#1]%
		\phantomsection
		\def\@currentlabel{#1}
	\fi%
}

\usepackage{hyperref}

\usepackage[noabbrev,capitalize]{cleveref}

\usepackage[style=numeric-comp]{biblatex}

\let\originalleft\left
\let\originalright\right
\renewcommand{\left}{\mathopen{}\mathclose\bgroup\originalleft}
\renewcommand{\right}{\aftergroup\egroup\originalright}

\allowdisplaybreaks
\theoremstyle{plain}
\newtheorem{theorem}{Theorem}[section]
\newtheorem{definition}[theorem]{Definition}
 
\newtheorem{lemma}[theorem]{Lemma} 
\newtheorem{corollary}[theorem]{Corollary}

\theoremstyle{definition}
\newtheorem{remark}[theorem]{Remark}

\AddToHook{env/definition/begin}{\crefalias{theorem}{definition}}
\AddToHook{env/proposition/begin}{\crefalias{theorem}{proposition}}
\AddToHook{env/lemma/begin}{\crefalias{theorem}{lemma}}
\AddToHook{env/corollary/begin}{\crefalias{theorem}{corollary}}
\AddToHook{env/notation/begin}{\crefalias{theorem}{notation}}
\AddToHook{env/remark/begin}{\crefalias{theorem}{remark}}

\newcommand{\R}{\mathbb{R}} 
 
\newcommand{\Z}{\mathbb{Z}}

\newcommand{\N}{\mathbb{N}}

\newcommand{\T}{\mathbb{T}}

\DeclareMathAlphabet{\othermathbb}{U}{bbold}{m}{n}

\newcommand{\der}[2][]{\@ifnextchar\der{\,#1\mathrm{d}#2\!}{\,#1\mathrm{d}#2}}

\DeclareMathOperator{\range}{range}

\DeclareMathOperator*{\essinf}{ess~inf}

\DeclareMathOperator{\Id}{Id}

\newcommand{\ee}{\mathrm{e}}
\newcommand{\ii}{\mathrm{i}}
\newcommand{\const}{\mathrm{const.}}

\newcommand{\set}[1]{{\left\{ #1 \right\}}}

\newcommand{\norm}[1]{\left\lVert #1 \right\rVert}

\newcommand{\dv}[3][]{%
	\if\relax\detokenize{#1}\relax%
		\frac{\mathrm{d} #2}{\mathrm{d} #3}%
	\else%
		\frac{\mathrm{d}^{#1} #2}{\mathrm{d} {#3}^{#1}}%
	\fi%
}

\newcommand{\dtsquare}{\dv[2]{}{t}}

\newcommand{\calM}{\mathcal{M}}

\newcommand{\dt}{\ \mathrm{d}t}
\makeatother

\bibliography{bibliography}

\begin{document}
	\title[Homoclinics, rogue waves and breathers in nonlinear lattice wave equations]{Homoclinics, rogue waves and breathers in nonlinear lattice wave equations}

	\author{Julia Henninger$^1$}
	\email{julia.henninger@kit.edu}
	\author{Wolfgang Reichel$^1$}
	\email{wolfgang.reichel@kit.edu}
	\address{$^1$Institute for Analysis, Karlsruhe Institute of Technology (KIT), D-76128 Karlsruhe, Germany}
	
	\date{\today}
	\subjclass[2020]{Primary: 35L71, 37C29; Secondary: 35A15, 49J35, }
    % \subjclass[2000]{Primary: 35L71, 49J35; Secondary: 35B10, 34L05}
    % 	35L71  Second-order semilinear hyperbolic equations
    % 49J35   Existence of solutions for minimax problems
    % 37C29   	Homoclinic and heteroclinic orbits for dynamical systems
    % 35A15   	Variational methods applied to PDEs
   
	\keywords{Nonlinear lattice equations, FPUT, rogue waves, homoclinic solutions, breathers, variational methods}

	\begin{abstract}
		We prove the existence of rogue waves and breathers in nonlinear lattice wave equations including the nonlinear Klein--Gordon and the FPUT lattice (with added local forces). The main feature of our lattice wave equation is that the kinetic part $\frac{\mathrm{d}^2}{\mathrm{d}t^2}$ is multiplied by a non-constant function $\frac{1}{d(t)}$ such that gaps in the spectrum of $\frac{1}{d(t)}\frac{\mathrm{d}^2}{\mathrm{d}t^2}$ open wide enough to include the spectrum of the spatial linear operator. We find solutions as critical points of an indefinite functional using a saddle-point method combined with concentration-compactness arguments. In case of temporally $T$-periodic coefficients and under identical assumptions as for rogue waves, the same variational method also provides existence of breather solutions whose temporal period is an arbitrary prescribed integer multiple of $T$. 
	\end{abstract}

	\maketitle
%\tableofcontents
	\section{Introduction and main result}

We are interested in particular solutions for the semilinear Klein--Gordon-type lattice equation
\begin{align}\label{eq:main_0}
    \frac{1}{d(t)}\ddot{u}_n  - \Delta_1 u_n + q_n u_n = g(u_n) \quad \mbox{ on } \R \times \Z
\end{align}
as well as for the FPUT(Fermi--Pasta--Ulam--Tsingou)-type equation \cite{fput}
\begin{align}\label{eq:main_00}
    \frac{1}{d(t)}\ddot{u}_n  - \Delta_1 u_n + q_n u_n = g\bigl(u_{n+1}-u_n\bigr)-g\bigl(u_n-u_{n-1}\bigr) \quad \mbox{ on } \R \times \Z
\end{align}
where $\Delta_1 u_n \coloneqq u_{n+1}-2 u_n+u_{n-1}$ denotes the discrete Laplacian on the lattice $\Z$. For the moment we understand \eqref{eq:main_0} and \eqref{eq:main_00} pointwise for every $(n,t)\in \Z\times \R$ with classical derivatives $\dot{u}_n(t) \coloneqq \frac{\der}{\der t}u_n(t)$, $\ddot{u}_n(t) \coloneqq \frac{\der^2}{\der t^2}u_n(t)$. Our goal is to construct rogue wave solutions, i.e., real-valued, nontrivial solutions which are localized in space and in time in the sense $\lim_{|n|+|t|\to\infty} u_n(t)=0$.

More generally, we investigate a framework which allows to simultaneously treat \eqref{eq:main_0} and \eqref{eq:main_00}. For this purpose we consider the nonlinear lattice wave equation
\begin{align}\label{eq:main}
    \frac{1}{d(t)}\ddot{u}  +Mu= B^* f^\sharp(t, Bu) \quad \mbox{ on } \R\times \Z
\end{align}
with bounded linear operators $M, B: \ell^2(\Z)\to \ell^2(\Z)$ and where we will see that the Nemytskii operator $u \mapsto f^\sharp(t, u) \coloneqq (f(n,t,u_n))_{n\in \Z}$ maps $\ell^2(\Z)$ to $\ell^2(\Z)$ for almost all $t\in \R$. Clearly, $M=-\Delta_1+q$, and $B=\Id$ and $f(n,t,s)=g(s)$ corresponds to \eqref{eq:main_0} whereas the \emph{difference operator} $B:\ell^2(\Z) \to \ell^2(\Z)$ given by $(Bu)_n=u_{n+1}-u_n$ for $n \in \Z$ and $f(n,t,s)=-g(s)$ corresponds to \eqref{eq:main_00}. We shall prove in our main result of Theorem~\ref{thm:main} the existence of a rogue wave $u$ of \eqref{eq:main} as a function $u\in H^2(\R;\ell^2(\Z))$ so that in particular $u(t)\in \ell^2(\Z)$ for almost all $t$. Therefore, one can also view rogue waves as homoclinic solutions $t\mapsto u(t)\in \ell^2(\Z)$ with $\lim_{|t|\to\infty} u(t)=0$. While the main emphasis of our paper is on rogue waves, it turns out that the existence of time-periodic solutions in $\ell^2(\Z)$ (breathers) can be obtained by the same variational method. More precisely, in those cases where where \eqref{eq:main} has an underlying $T$-periodic structure in time, we get the existence of breathers with periods $kT$ for arbitrary $k\in \N$, cf. Corollary~\ref{cor:breathers}, under exactly the same assumptions as for rogue waves in Theorem~\ref{thm:main}.

In the present work, we will show existence of rogue wave and breather solutions to \eqref{eq:main} by means of variational methods. Our approach is inspired by \cite{henninger} which shows existence of rogue wave solutions to a (spatially) continuous version of \eqref{eq:main}.

Next we discuss the assumptions on $M, B, d$. 
\begin{enumerate}[label=(L\arabic*)]
    \item $M: \ell^2(\Z) \to \ell^2(\Z)$ is linear, bounded, self-adjoint, positive, \label{as:M}
    \item $d \in L^\infty(\R)$ with $\essinf_\R d >0 $. \label{as:dbdd}
\end{enumerate}
With \ref{as:dbdd} we consider $\frac{1}{d(t)}\frac{\der^2}{\der t^2}: H^2(\R)\subset L^2_d(\R)\to L^2_d(\R)$ as a self-adjoint operator on the weighted $L^2$-space $L^2_d(\R)$. Further, we assume
\begin{enumerate}[label=(L\arabic*)] \setcounter{enumi}{2}
    \item $\sigma\left( M \right) \cap \sigma\left( -\frac{1}{d(t)} \dtsquare \right)=\emptyset$, \label{as:specgap}
    \item $B:\ell^\infty(\Z) \to \ell^\infty(\Z)$ is linear, bounded, and  $B|_{\ell^2(\Z)}$ is injective with $\range(B|_{\ell^2(\Z)}) \subset \ell^2(\Z)$. \label{as:B}
\end{enumerate}
For the nonlinearity $f$ and its primitive $F(n,t,s)=\int_0^s f(n,t,\sigma) \der \sigma$ we make the following assumptions:
\begin{enumerate}[label=(f\arabic*)]
    \item $f \in C(\Z \times \R \times \R)$, \label{as:f1}
    \item $f(n,t,s)=o(s)$ uniformly in $t$ and $n$, as $s \to 0$, \label{as:f2} 
    \item $s \mapsto \frac{f(n,t,s)}{|s|}$ strictly increasing on $(-\infty,0)$ and $(0,\infty)$ for all $(n,t)\in \Z\times\R$, \label{as:f3}
    \item $\frac{F(n,t,s)}{s^2} \to \infty$ uniformly in $t$ and $n$, as $|s| \to \infty$, \label{as:f4}
    \item $\exists C>0, p\in (1,\infty)$ such that $|f(n,t,s)| \leq C (1+|s|^p)$ for all $(n,t,s)\in \Z\times \R^2$. \label{as:f5}
\end{enumerate}
\begin{remark} \label{rem:comments} Let us make some comments on the assumptions. 
\begin{itemize} 
\item[(i)] If $M=-\Delta_1+q$ is the discrete Laplacian with an added potential $q=(q_n)_{n\in\Z}\in \ell^\infty(\Z)$, $q\geq 0$ as in \eqref{eq:main_0} and \eqref{eq:main_00} then it satisfies \ref{as:M} as we shall see in Lemma~\ref{lem:spec_disc_lap_q} in Appendix~\ref{sec:examples}. A further example is $M=\Delta^2_1+q$, $q\geq 0$, where $\Delta_1^2 u_n \coloneqq u_{n+2}-4u_{n+1}+6 u_n -4 u_{n-1}+u_{n-2}$ is a discretization of the bi-Laplacian.
\item[(ii)] Condition \ref{as:specgap} means that $0$ is not in the spectrum of the wave operator $L \coloneqq \frac{1}{d(t)}\frac{\mathrm{d}^2}{\mathrm{d}t^2}+M$ provided we consider it as a self-adjoint operator on a suitable function space and extend both $\frac{1}{d(t)}\frac{\der^2}{\der t^2}$ and $M$ to functions of $n$ and $t$, cf. Section~\ref{sec:varset}. This is central for the applicability of the variational method. A class of examples of operators $M$ and periodic two-step potentials $d$ such that condition \ref{as:specgap} is fulfilled are provided in Corollary~\ref{cor:zitierbar}.
\item[(iii)] Condition \ref{as:B} is satisfied both for $B=\Id$ and the difference operator $(Bu)_{n} = u_{n+1}-u_n$.
\item[(iv)] A prototypical example for the nonlinearity is $f(n,t,s)=\Gamma(n,t) |s|^{p-1}s$ with $\Gamma \in C(\Z\times \R)$ chosen according to one of four cases of assumptions given in Theorem~\ref{thm:main}. Clearly, one can also consider \eqref{eq:main} with a right hand side being a finite sum of terms of the form $B^* f^\sharp(t, Bu)$.
\end{itemize}
\end{remark}
Lastly, for given $N \in \Z$  we introduce the \emph{shift operator} $S_N : \ell^2(\Z) \to \ell^2(\Z)$ by $(S_N v)_n=v_{n+N}$. We say that $M$ and $B$ commute with $S_N$ if $MS_N=S_NM$ and $BS_N=S_NB$. With this at hand we can now state our main result. 

\begin{theorem}\label{thm:main}
    Assume that \ref{as:M}--\ref{as:B}, \ref{as:f1}--\ref{as:f5} hold together with one of the following additional assumptions: 
\begin{enumerate}
    \item[(A)] $\displaystyle \lim_{|(n,t)| \to \infty} f(n,t,s)=0$ locally uniformly in $s\in\R$.\label{as:f:locnt}
 \item[(B)] There exist $T>0$ and $N \in \N$ such that $f(n,t,s)=f(n,t+T,s)$, $f(n,t,s)=f(n+N,t,s)$ and $d(t)=d(t+T)$ for all $(n,t,s) \in \Z \times \R \times \R$ and $M, B$ commute with $S_N$. \label{as:f:pernt}
 \item[(C1)] There exists $T>0$ such that $f(n,t,s)=f(n,t+T,s)$,  $d(t)=d(t+T)$ for all $(n,t,s) \in \Z \times \R \times \R$ and $\lim_{|n| \to \infty} f(n,t,s)=0$ locally uniformly in $s\in\R$ and uniformly in $t \in [0,T]$.     \label{as:f:locn:pert}
\item[(C2)] There exists $N \in \N$ such that $f(n,t,s)=f(n+N,t,s)$ for all $(n,t,s) \in \Z \times \R \times \R$, the operators $M, B$ commute with $S_N$, and $\lim_{|t| \to \infty} f(n,t,s)=0$ locally uniformly in $s\in\R$ and uniformly in $n \in \{0,\ldots, N-1\}$ .  \label{as:f:loct:pern}
\end{enumerate}
Then there exists a least-energy rogue wave solution $u\in H^2(\R;\ell^2(\Z))$ to \eqref{eq:main}. In particular $u\in C_0^1(\R;\ell^2(\Z))$. The same holds with $f$ replaced by $-f$.
\end{theorem}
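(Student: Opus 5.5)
The plan is to find $u$ as a critical point of the indefinite functional
\[
J(u)=\frac12\int_\R \Bigl( -\frac{1}{d(t)}\|\dot u(t)\|_{\ell^2}^2+\langle Mu(t),u(t)\rangle_{\ell^2}\Bigr)\der t-\int_\R\sum_{n\in\Z}F\bigl(n,t,(Bu(t))_n\bigr)\der t .
\]
Here the time integration is in the weighted space $L^2_d$ where appropriate, so that critical points are weak solutions of \eqref{eq:main}. We work on the Hilbert space $H$ obtained as the form domain of the self-adjoint wave operator $L=\frac{1}{d(t)}\frac{\der^2}{\der t^2}+M$ on $L^2_d(\R;\ell^2(\Z))$.

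\textbf{Step 1: the linear splitting.} By \ref{as:specgap}, $0\notin\sigma(L)$, since $\sigma(L)$ is the closure of $\sigma(M)-\sigma(-\frac1d\frac{\der^2}{\der t^2})$ (tensor-product structure) and both spectra are closed with one of them bounded by \ref{as:M}. Hence $H=H^+\oplus H^-$ with $\pm\langle Lu,u\rangle\ge c\|u\|^2$ on $H^\pm$. Both subspaces are infinite dimensional, so this is a strongly indefinite problem.

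\textbf{Step 2: the nonlinear term.} Using \ref{as:B}, \ref{as:f1}, \ref{as:f2}, \ref{as:f5} and the embedding $\ell^2\subset\ell^q$ for $q\ge2$, together with a one-dimensional Sobolev embedding in $t$, the map $u\mapsto\int\sum_n F(n,t,(Bu)_n)$ is $C^1$ on $H$. We must check that the form domain controls $L^\infty_tL^{2}$ or the relevant mixed norms. If it does not, we use that the norms involved are equivalent to an $H^1_t$-type norm.

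\textbf{Step 3: minimax.} Following the Szulkin--Weth generalized Nehari manifold approach, as in \cite{henninger}, we set
\[
\mathcal N=\{u\in H\setminus H^-:\ J'(u)u=0,\ J'(u)v=0\ \forall v\in H^-\}.
\]
Condition \ref{as:f3} gives, for each $u\notin H^-$, a unique maximiser of $J$ on $\R^+u\oplus H^-$. Conditions \ref{as:f2} and \ref{as:f4} give $c_0=\inf_{\mathcal N}J>0$, coercivity on $\mathcal N$, and a bounded Palais--Smale (or Cerami) minimizing sequence $(u_k)$ at level $c_0$. Injectivity of $B$ on $\ell^2$ is used so that $\int F(Bu)=0$ forces $u\in H^-$. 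For $-f$ the roles of $H^\pm$ are swapped, i.e.\ we apply the argument to $-J$.

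\textbf{Step 4: concentration-compactness (main obstacle).} We must rule out vanishing, meaning $\sup_{n,\,\tau}\int_{\tau}^{\tau+1}|(Bu_k)_n|^2\der t\to0$. A Lions-type lemma for $\Z\times\R$ then gives $\sum_n\int|Bu_k|^q\to0$ for $q>2$, and hence, via \ref{as:f2} and \ref{as:f5}, that $J'(u_k)u_k^+$ forces $\|u_k^+\|\to0$. This contradicts $c_0>0$. So there are $(n_k,\tau_k)$ with non-vanishing local mass.

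We then treat the four cases separately.
\begin{itemize}
\item[(A)] Translations are not allowed. If $|(n_k,\tau_k)|\to\infty$, the nonlinearity vanishes there and a local argument contradicts non-vanishing. So $(n_k,\tau_k)$ stays bounded.
\item[(B)] We shift by multiples of $N$ and $T$; the functional is invariant under these shifts because $M$ and $B$ commute with $S_N$ and $d$ is $T$-periodic.
\item[(C1)] We shift in $t$ by multiples of $T$ and argue as in (A) in $n$.
\item[(C2)] We shift in $n$ by multiples of $N$ and argue as in (A) in $t$.
\end{itemize}
After shifting, a weak limit $u\ne0$ exists. Weak-to-weak continuity of $J'$ (local compactness in $t$, pointwise in $n$) gives $J'(u)=0$, and Fatou's lemma with \ref{as:f3} (namely $\frac12f(s)s-F(s)\ge0$) gives $J(u)\le c_0$. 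Hence $u$ is a least-energy solution.

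The delicate part is the vanishing case in (A), (C1) and (C2), where one must show that $\int\sum_n F$ concentrating at infinity produces no energy. Here I would use the decay of $f$ together with boundedness of $Bu_k$ in $L^\infty_t\ell^\infty$.

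\textbf{Step 5: regularity.} From the equation, $\ddot u=d\,(B^*f^\sharp(t,Bu)-Mu)\in L^2(\R;\ell^2(\Z))$, so $u\in H^2(\R;\ell^2(\Z))$. The embedding $H^2(\R;\ell^2(\Z))\subset C^1_0(\R;\ell^2(\Z))$ then gives $u\in C_0^1(\R;\ell^2(\Z))$.
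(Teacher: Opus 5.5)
Your overall architecture matches the paper's: the Nehari--Pankov manifold in the sense of Szulkin--Weth, a Lions lemma on $\Z\times\R$, translations by multiples of $N$ and $T$, Fatou with $\frac12 f(n,t,s)s-F(n,t,s)\ge 0$, and $H^2$-regularity read off from the equation. But Step~3 has a real gap. You assert that (f2) and (f4) give coercivity of $J$ on $\mathcal{N}$, and hence a \emph{bounded} minimizing Palais--Smale sequence, and Step~4 then relies on this boundedness. The growth conditions alone do not give it.

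Take $u_k\in\mathcal{N}$ with $J(u_k)\le C$, $\norm{u_k}_{\mathcal{H}}\to\infty$, and set $v_k=u_k/\norm{u_k}_{\mathcal{H}}$. Then $J(u_k)\ge 0$ only yields $\norm{v_k^+}_{\mathcal{H}}^2\ge\frac12$. To contradict $0\le J(u_k)/\norm{u_k}_{\mathcal{H}}^2\le \frac12-J_1(\norm{u_k}_{\mathcal{H}}v_k)/\norm{u_k}_{\mathcal{H}}^2$ through superquadratic growth, you need a \emph{nonzero} weak limit. Without further structure, $v_k$ may converge weakly to $0$ or run off to infinity in $\Z\times\R$.

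So the case-dependent compactness must already be used here, on the normalized sequence:
\begin{itemize}
\item Rule out vanishing of $v_k^+$ via $C\ge J(u_k)\ge J(\tau v_k^+)\ge \frac{\tau^2}{4}-J_1(\tau v_k^+)$ for all $\tau>0$. In case (B) this uses Lions' lemma; in (C1), (C2) its one-variable versions combined with the decay of $f$ in the other variable; in (A), weak continuity of $J_1$.
\item Obtain, after translation in cases (B), (C1), (C2), a sequence $w_k\rightharpoonup w\neq 0$ with $J(\norm{u_k}_{\mathcal{H}}w_k)=J(u_k)$.
\item Use that $J_1(sw)/s^2\to\infty$ uniformly on the weakly compact set $\{w_k\}\cup\{w\}$. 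This property itself needs (f4) uniformly in $(n,t)$, injectivity of $B$, and local compactness.
\end{itemize}
This is the content of Lemma~\ref{lem:PSbounded} together with Lemma~\ref{lem:vorfunkt}(iii), and it is missing from your plan.

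A smaller point concerns Step~4 in case (A), and the ``argue as in (A)'' parts of (C1) and (C2). Smallness of the nonlinearity far out does not contradict non-vanishing of mass at $(n_k,\tau_k)$. Nor need $(n_k,\tau_k)$ stay bounded: the maximizing window may escape while other mass remains near the origin. The correct argument runs as follows. If $u_k\rightharpoonup 0$, split $\norm{u_k^+}_{\mathcal{H}}^2=J_1'(u_k)u_k^+$ into two parts:
\begin{itemize}
\item a bounded region of $\Z\times\R$, whose contribution tends to $0$ by local compactness;
\item its complement, whose contribution is at most $\varepsilon C$ by (f2) and the decay of $f$ (via the paper's $h_\varepsilon$ or your $L^\infty$ bound).
\end{itemize}
This contradicts $\inf_{\mathcal{N}}J>0$. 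In (C1) you must first produce a bounded set of sites $\mathcal{Z}_0$ on which $u_k$ (or $Bu_k$) does not vanish in the sense of the one-variable Lions lemma, and only then translate in $t$; symmetrically in (C2).
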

\begin{remark} 
Condition \ref{as:f:locnt} means that $f$ is localized in $t$ and $n$ whereas condition \ref{as:f:pernt} asserts a periodic structure of \eqref{eq:main} in both space and time. Assuming either condition \ref{as:f:locn:pert} or \ref{as:f:loct:pern}, we require a periodic structure in only one of the variables time or space, and that $f$ is localized with respect to the complementary space or time variable. The terminology \emph{least-energy solution} is explained at the beginning of Section~\ref{sec:proofmain}.
\end{remark}

\begin{corollary} \label{cor:breathers}
    Under the same assumptions  \ref{as:M}--\ref{as:B}, \ref{as:f1}--\ref{as:f5} as in Theorem~\ref{thm:main} together with one of the $T$-periodic cases \ref{as:f:pernt} or \ref{as:f:locn:pert} there exists for every $k\in \N$ a $kT$-periodic least-energy breather solution $u\in H^2_\mathrm{per}(\R;\ell^2(\Z))\cap C_\mathrm{per}^1(\R;\ell^2(\Z))$ to \eqref{eq:main}. The same holds with $f$ replaced by $-f$.
\end{corollary}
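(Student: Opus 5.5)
The plan is to rerun the variational argument behind Theorem~\ref{thm:main}, with the time line $\R$ replaced by the circle $\R/(kT\Z)$. On $\ell^2(\Z)$-valued $kT$-periodic functions we take the weighted space $L^2_d((0,kT);\ell^2(\Z))$ and the operator
\[
L_{k}=\frac{1}{d(t)}\frac{\mathrm{d}^2}{\mathrm{d}t^2}+M ,
\]
with domain $H^2_\mathrm{per}((0,kT);\ell^2(\Z))$. We look for critical points of
\[
J(u)=\frac12\langle -L_k u,u\rangle - \int_0^{kT}\sum_{n}F(n,t,(Bu)_n)\,\mathrm{d}t .
\]
The sign convention follows the one used for Theorem~\ref{thm:main}; replacing $f$ by $-f$ amounts to replacing $J$ by $-J$, i.e.\ swapping the roles of the positive and negative spectral subspaces.

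\textbf{Step 1: the spectral gap.} The key point is that \ref{as:specgap} passes to the periodic problem. Since $d$ is $T$-periodic, Floquet--Bloch theory writes the spectrum of $-\frac{1}{d}\frac{\mathrm{d}^2}{\mathrm{d}t^2}$ on $L^2_d(\R)$ as the union over quasimomenta $\theta\in[0,2\pi)$ of the spectra of the $\theta$-quasiperiodic problems on $(0,T)$. The spectrum of the $kT$-periodic problem is the union of the $\theta$-spectra with $\ee^{\ii k\theta}=1$. It is therefore contained in $\sigma(-\frac{1}{d}\frac{\mathrm{d}^2}{\mathrm{d}t^2})$.

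Because $M$ acts only in $n$ and the $t$-operator only in $t$, the tensor structure gives
\[
\sigma(L_k)\subset \sigma\Bigl(\tfrac{1}{d}\tfrac{\mathrm{d}^2}{\mathrm{d}t^2}\Bigr)+\sigma(M).
\]
The first spectrum is discrete, with eigenvalues in the closed set $\sigma(-\frac{1}{d}\frac{\mathrm{d}^2}{\mathrm{d}t^2})$, and $\sigma(M)$ is compact. Together with \ref{as:specgap} this gives $0\notin\sigma(L_k)$, with a positive distance to the spectrum. This yields the splitting $E=E^+\oplus E^-$ of the form domain $E=H^1_\mathrm{per}((0,kT);\ell^2(\Z))$ together with an equivalent norm, exactly as in Section~\ref{sec:varset}.

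\textbf{Step 2: the variational machinery.} \ref{as:B}, \ref{as:f1}--\ref{as:f5}, and the embedding $H^1_\mathrm{per}\hookrightarrow L^\infty((0,kT);\ell^2(\Z))\subset L^\infty((0,kT);\ell^\infty(\Z))$ hold verbatim on the bounded time interval. So the nonlinear functional is $C^1$, and its derivative, as well as the Nehari--Pankov manifold $\calM$ and the reduction map, have the same properties as in the proof of Theorem~\ref{thm:main}. We take that proof to be the generalized Nehari/Szulkin--Weth method. Hence the least-energy level $c_k=\inf_{\calM}J>0$ is attained once a bounded minimizing (Cerami/Palais--Smale) sequence can be shown not to vanish and to converge up to symmetries.

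\textbf{Step 3: compactness modulo shifts — the main obstacle.} Compactness is the only genuinely new point. In time there is nothing to do, since $(0,kT)$ is compact and there is no loss of compactness in $t$. In $n$ we proceed by case:
\begin{itemize}
\item In case \ref{as:f:pernt}, we use the shifts $S_{jN}$, which commute with $M$ and $B$ and leave $f$ and $J$ invariant. A Lions-type lemma, applied in the variable $n$ only, shows that nonvanishing $\sup_n\int_0^{kT}|(Bu_m)_n|^2\,\mathrm{d}t\not\to0$ allows us to recenter and extract a nontrivial weak limit.
\item In case \ref{as:f:locn:pert}, vanishing in $n$ combined with $f\to0$ as $|n|\to\infty$, uniformly in $t\in[0,T]$ and hence on $[0,kT]$, forces the nonlinear terms to tend to $0$. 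That contradicts $c_k>0$.
\end{itemize}
In both cases the weak limit is a critical point. A Fatou-type argument using \ref{as:f3}, namely that $\frac12 f s-F\ge0$ and that $u\mapsto\frac12 f(u)u-F(u)$ is increasing, shows that the limit lies on $\calM$ at level $c_k$.

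\textbf{Step 4: regularity.} As in Theorem~\ref{thm:main}, the weak equation gives $\frac{1}{d}\ddot u\in L^2$. Hence $u\in H^2_\mathrm{per}((0,kT);\ell^2(\Z))\subset C^1_\mathrm{per}$, and the $kT$-periodic extension of $u$ is a breather solution of \eqref{eq:main}.
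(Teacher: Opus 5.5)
Your proposal is correct and follows the paper's proof. The paper likewise works on $H^1(\mathbb{T}_{kT};\ell^2(\Z))$, uses Floquet--Bloch theory to get $\sigma(-\frac{1}{d}\frac{\mathrm{d}^2}{\mathrm{d}t^2})_{L^2(\mathbb{T}_{kT})}\subset\sigma(-\frac{1}{d}\frac{\mathrm{d}^2}{\mathrm{d}t^2})_{L^2(\R)}$ so that (L3) persists, and reruns the Szulkin--Weth argument. Your Step~3 just spells out compactness details (no loss in $t$, shifts in $n$ for (B), localization in $n$ for (C1)) that the paper summarizes as ``exactly the same way as before''.

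One cosmetic slip: the paper's $J_0=\frac12 b_L(u,u)=\frac12\langle Lu,u\rangle_{L^2_d}$, so your $\frac12\langle -L_k u,u\rangle$ actually corresponds to the $-f$ problem rather than following the paper's convention. This is harmless, because you treat both signs symmetrically by swapping the positive and negative spectral subspaces.
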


\begin{remark}
    It is reasonable to conjecture that a subsequence of the least-energy $kT$-periodic breather solutions converges to a least-energy rogue wave solution of Theorem~\ref{thm:main} as $k\to \infty$. This is an open problem which, due to its complexity, we leave for future work. In the case of the nonlinear Schr\"odinger equation an argument of this type has been carried out in \cite{pankov}. 
\end{remark}

Let us bring our main result from Theorem~\ref{thm:main} on rogues waves/homoclinic solutions into perspective with previous results from the literature. 

\underline{Homoclinics in the scalar ODE case:} As an application of simplest form take $f(n,t,s)=-s^3$, $B=M=\Id$ and $u(t)=(\ldots,0,\ldots, 0, u_0(t), 0, \ldots,0,\ldots)$ a sequence where only the $n=0$-mode is present. In this case \eqref{eq:main} becomes the ODE $\frac{1}{d(t)} \ddot u_0 + u_0+  u_0^3=0$ for which we are seeking a homoclinic orbit. Obviously, for constant $d>0$ only periodic solutions and no homoclinics exist. From the linear ODE perspective a homoclinic requires an exponential dichotomy, i.e., a pair of linearly independent solutions of $\frac{1}{d(t)} \ddot u + u=0$ tending to $0$ as $t\to\pm\infty$, respectively. This, in turn, is in one-to-one correspondence with $1\not \in \sigma(-\frac{1}{d(t)} \frac{\mathrm{d}^2}{\mathrm{d}t^2})$, cf. \cite{sell}, which is identical with our assumption \ref{as:specgap}. Turning the necessary linear condition into a sufficient condition for the nonlinear ODE to possess a homoclinic orbit, however, requires specific nonlinearities, e.g., $f(n,t,s)=-s^3$. From a variational perspective, the homoclinic orbit can be obtained as a ground state of the functional $J(u)= \int_\R \dot u^2 - d(t) u^2 - \frac{1}{2} d(t) u^4\der t$ on the space $H^1(\R)$ and the existence of such a ground state is well known, see, e.g., \cite{jeanjean} and \cite{pankov}, provided $1\not \in \sigma(-\frac{1}{d(t)} \frac{\mathrm{d}^2}{\mathrm{d}t^2})$ and $d$ is periodic. 

\underline{Homoclinics in the finite dimensional ODE case:}
Next we  consider homoclinic solutions of \eqref{eq:main} in the finite-dimensional case with state space $\ell^2(\{1,\dots, N\})$ considered via extension by zero as a subspace of $\ell^2(\Z)$. In this case, \eqref{eq:main} reads as $\frac{1}{d(t)}\ddot u + Mu = \nabla_u I(t,u)$ where $M\in \R^{N\times N}_\text{sym}$ is positive definite and $I(t,u) \coloneqq \sum_{n=1}^N F(n,t, (Bu)_n(t))$. For such systems, but with $M$ negative definite,  classical results of Coti--Zelati, Rabinowitz \cite{coti_zelati_rabinowitz} apply. They used a variational mountain pass approach where the quadratic part of the functional was definite. The case where $M\in \R^{N\times N}_\text{sym}$ is positive definite and $\sigma(M)\cap \sigma(-\frac{1}{d(t)}\frac{\mathrm{d}^2}{\mathrm{d}t^2})=\emptyset$ (like in our case), which forces the quadratic part of the underlying functional to be indefinite, has been investigated in \cite{arioli_szulkin}. In both papers, the structural assumptions on $I$ are more general than the case considered in the present work. On the other hand, as can be seen already for $N=1$ due to examples in \cite[Chapter 3.2]{szulkin_weth}, our assumptions on $f, F$ cover cases which are not covered by \cite{coti_zelati_rabinowitz, arioli_szulkin}.

\underline{Homoclinics/rogues waves in $\ell^2(\Z)$ for nonlinear wave equations:} The overwhelming majority of results on discrete rogue waves seem to have been produced for discrete NLS models, cf. \cite{flach_gorbach} for a survey, which are very different from the discrete nonlinear wave equation. This is not surprising since it is the richness of patterns in continuous NLS that allows to study the persistence of, e.g., rogue waves, under perturbations like discretization. For \eqref{eq:main_0} we are not aware of any result on the existence of a rogue wave. Instead, existence results for rogue waves in spatially continuous versions of \eqref{eq:main} with $B=\Id$ are given in \cite{henninger}. Discrete breathers in nonlinear Klein--Gordon lattices like \eqref{eq:main_0} have been constructed, e.g., in \cite{bambusi_et_al} and \cite{Hennig_Karachalios}.

\underline{Homoclinics/rogues waves in $\ell^2(\Z)$ for the FPUT model:} One approach to rouge waves in an FPUT lattice is to use a multiple scale ansatz and derive a modulation equation such as NLS or mKDV which have an exact rogue wave solution, cf. \cite{Chong_Lee_etal, chong_kevrekidis_etal,Kevrekisid_yang_sun_liu}. The idea is then that the rogue wave pattern arising from the modulation equation can also be (numerically) observed in the original FPUT lattice. A similar approach to 2d FPUT lattices or FPUT systems can be found in \cite{miyazawa_chong_kevrekidis_yang, yang_sun}. While the multiple scale ansatz combined with numerical methods provides very detailed information on the rogue waves, its rigorous justification is difficult and it is often restricted to a perturbative regime. In contrast, our purely analytical rigorous approach provides less details on the actual shape of the rogue wave but has the advantage of being non-perturbative. Additionally, we need two main features which have not been present in the multiple scale approaches: one is the time-varying coefficient $1/d(t)$ in front of the kinetic term $\frac{\mathrm{d}^2}{\mathrm{d}t^2}$ and the other is the presence of the added positive local forces $q_n u_n$, $\inf_\Z q>0$ in \eqref{eq:main_00}, cf. Corollary~\ref{cor:zitierbar}. Without either of them the essential condition \ref{as:specgap} cannot be fulfilled. Finally, let us mention three sources for breathers in FPUT-chains: using spatial dynamics and center manifold reduction James and Noble~\cite{james_noble} have shown existence of FPUT breathers with positive linear restoring forces and squared frequencies $k^2\omega^2$ in spectral gaps of the spatial linear operator, which in our case amounts to the condition that $\sigma(-\frac{1}{d(t)}\frac{\mathrm{d}^2}{\mathrm{d}t^2})_{L^2\text{per}([0,kT])}\cap \sigma(M)=\emptyset$. When the linear restoring forces are negative (which corresponds to $+\Delta_1$ instead of $-\Delta_1$ in \eqref{eq:main_00}), Arioli and Gazzola~\cite{arioli_gazzola} used variational methods to prove existence of breathers. A systematic variational approach to FPUT-breathers is given in Pankov~\cite{PankovFPU}. In all three approaches $d=\const$ and there were no local forces, i.e., $q=0$.

\subsection*{Structure of the paper and notation}
For $k\in \N_0$ and $q,q_1,q_2\in [1,\infty]$ we use the standard Lebesgue and Sobolev spaces $L^q(\R)$, $H^k(\R)$ as well as the Bochner spaces $L^{q_1}(\R;\ell^{q_2}(\Z))$, $H^k(\R;\ell^2(\Z))$, $C_0(\R;\ell^q(\Z))$, $C_0^1(\R;\ell^q(\Z))$. When $d:\R \to [0,\infty)$ is a measurable, non-negative function we use the weighted space $L_d^2(\R)$ equipped with the norm ${\norm{u}_{L_d^2(\R)}^2}=\int_\R |u(t)|^2 d(t) \der t$ and extend this definition to the weighted Bochner space $L_d^2(\R;\ell^2(\Z))$ with the norm ${\norm{u}_{L_d^2(\R;\ell^2(\Z))}^2}=\int_\R \sum_{n\in \Z} |u_n(t)|^2 d(t) \der t$. For an element $u$ in the Bochner space $L^2_d(\R;\ell^2(\Z))$ we write $u(t) = (u(t)_n)_{n\in \Z}\in \ell^2(\Z)$ for all $t\in \R$ and, for convenience, we use the notation $u(t)_n = u_n(t)$. The same notation is applied for elements of the other Bochner spaces as well. For a bounded, linear operator $A:X \to Y$, where $X$ and $Y$ are Banach spaces, we denote its operator norm by $\norm{A}_{\mathrm{op}, X \to Y}$ with the convention $\norm{A}_{\mathrm{op}, X \to X}=\norm{A}_{\mathrm{op}, X}$ whenever $A$ is a bounded endomorphism. We use the notation $\mathcal{Z} \subset \Z$ and $\mathcal{R} \subset \R$ to denote subsets of $\Z$ and $\R$, respectively.

From now on we assume that \ref{as:M}--\ref{as:B}, \ref{as:f1}--\ref{as:f5} are satisfied.
The rest of the paper is structured as follows. In Section~\ref{sec:varset} we construct a functional $J$ on the Hilbert space $\mathcal{H}=H^1(\R;\ell^2(\Z))$ such that critical points of $J$ are weak solutions of \eqref{eq:main}. This requires some properties of the wave operator $L\coloneqq \frac{1}{d(t)}\partial_t^2+M$ and its absolute value $|L|$. Furthermore, we give embedding properties of $\mathcal{H}$ and provide versions of P.L.~Lions' concentration compactness argument suitable for our space-time setting. In Section~\ref{sec:proofmain} we use a saddle-point technique to show the existence of a critical point of $J$ in $\mathcal{H}$ and analyze its further regularity properties. This will then complete the proof of our main result of Theorem~\ref{thm:main}. We finish this section with the proof of the existence of breathers stated in Corollary~\ref{cor:breathers}.
Finally, equations \eqref{eq:main_0} and \eqref{eq:main_00} are covered in Appendix~\ref{sec:examples} where we show that all conditions needed to apply Theorem~\ref{thm:main} can be satisfied for $M=-\Delta_1+q_n$ by a suitable choice of $(q_n)_{n\in\Z}$ and a specific class of periodic two-step potentials $d$.   
    \section{Variational setting}\label{sec:varset}

So far we considered the operator $\frac{1}{d(t)}\frac{\der^2}{\der t^2}$ as a self-adjoint operator on $H^2(\R)\subset L^2_d(\R)\to L^2_d(\R)$. This operator can be extended to the Bochner space $H^2(\R;\ell^2(\Z))\subset L^2_d(\R;\ell^2(\Z))$ by setting
$$
\left(\frac{1}{d(t)}\frac{\der^2}{\der t^2} u\right)(t):= 
 \left(\frac{1}{d(t)}\frac{\der^2}{\der t^2} u_n(t)\right)_{n\in \Z}
$$
since $t\mapsto u_n(t)$ lies in $H^2(\R)$ for every $n\in\Z$. In this way $\frac{1}{d(t)}\frac{\der^2}{\der t^2}: H^2(\R; \ell^2(\Z))\subset L^2_d(\R;\ell^2(\Z))\to L^2_d(\R;\ell^2(\Z))$ is a self-adjoint operator and its resolvent set (and hence its spectrum) stays the same.
Similarly, in assumption~\ref{as:M} the operator $M$ was considered as a continuous endomorphism on $\ell^2(\Z)$. It trivially extends to an endomorphism of $L^2_d(\R; \ell^2(\Z))$ by setting 
$$
(Mu)(t):= \bigl((Mu(t))_n\bigr)_{n\in\Z} 
$$
since $u(t)\in \ell^2(\Z)$ for almost all $t\in \R$. The extended operator $M: L^2_d(\R; \ell^2(\Z))\to L^2_d(\R; \ell^2(\Z))$ is bounded and self-adjoint and has the same resolvent set and hence the same spectrum as $M: \ell^2(\Z)\to \ell^2(\Z)$. Likewise, we can extend the shift operator $S_N$ as well as $B, B^\ast$ from \ref{as:B} to endomorphisms of $L^2_d(\R; \ell^2(\Z))$ and see that the extension of $B^\ast$ is still the adjoint of the extension of $B$. For $M, B$ and $u\in L^2_d(\R;\ell^2(\Z))$ we use for all $t\in \R$ and all $n\in \Z$ the pointwise notation
$$
(Mu)_n(t) \coloneqq \bigl(Mu(t)\bigr)_n, \quad (Bu)_n(t) \coloneqq \bigl(Bu(t)\bigr)_n.
$$
As the following two lemmas show, $B$ and $M$ extend as bounded endomorphisms also to other spaces. 

\begin{lemma}\label{lem:Bbounded}
If $B$ satisfies \ref{as:B} then 
\begin{enumerate} 
    \item $B: H^1(\R;\ell^2(\Z)) \to H^1(\R;\ell^2(\Z))$ is bounded and injective,
    \label{lem:Bbounded:a}
    \item
    $B:L^q(\R; \ell^q(\Z)) \to L^q(\R; \ell^q(\Z))$ is bounded for all $q \in [2,\infty]$. \label{lem:Bbounded:b} 
 \end{enumerate}
\end{lemma}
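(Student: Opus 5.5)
Both parts reduce to the fact that $B$ acts pointwise in $t$ as a fixed bounded operator on $\ell^2(\Z)$ and on $\ell^\infty(\Z)$, and that these two bounds interpolate.

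\emph{Step 1: $B$ is bounded on $\ell^2(\Z)$.} Assumption \ref{as:B} only says that $B|_{\ell^2(\Z)}$ maps into $\ell^2(\Z)$. Since $\ell^2(\Z)\hookrightarrow \ell^\infty(\Z)$ continuously and $B$ is bounded on $\ell^\infty(\Z)$, the closed graph theorem applies. Indeed, if $x_k\to x$ and $Bx_k\to y$ in $\ell^2(\Z)$, then both sequences also converge in $\ell^\infty(\Z)$, so $Bx_k\to Bx$ in $\ell^\infty(\Z)$. Hence $y=Bx$, and $B:\ell^2(\Z)\to\ell^2(\Z)$ is bounded; write $\norm{B}_{\mathrm{op},\ell^2(\Z)}$ for its norm.

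\emph{Step 2: part (a).} For $u\in H^1(\R;\ell^2(\Z))$ set $(Bu)(t)=Bu(t)$. Because $B$ is a bounded linear operator on $\ell^2(\Z)$, it commutes with Bochner integrals, and hence with weak derivatives: for $\varphi\in C_c^\infty(\R)$,
\[
\int_\R Bu(t)\,\varphi'(t)\der t = B\int_\R u(t)\varphi'(t)\der t = -B\int_\R \dot u(t)\varphi(t)\der t .
\]
Thus $\frac{\der}{\der t}(Bu)=B\dot u$. Measurability of $t\mapsto Bu(t)$ is preserved under composition with a continuous linear map. It follows that $\norm{Bu}_{H^1}\le \norm{B}_{\mathrm{op},\ell^2(\Z)}\norm{u}_{H^1}$. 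For injectivity, suppose $Bu=0$ in $H^1$. Then $Bu(t)=0$ for a.e.\ $t$, so injectivity of $B|_{\ell^2(\Z)}$ gives $u(t)=0$ for a.e.\ $t$, i.e.\ $u=0$.

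\emph{Step 3: part (b).} On $\ell^2(\Z)$, which is a subspace of $\ell^q(\Z)$ for every $q\ge2$, Riesz--Thorin interpolation between the $\ell^2$ and $\ell^\infty$ bounds gives
\[
\norm{Bx}_{\ell^q}\le \norm{B}_{\mathrm{op},\ell^2}^{2/q}\,\norm{B}_{\mathrm{op},\ell^\infty}^{1-2/q}\,\norm{x}_{\ell^q}.
\]
This is the one step that needs care, and I expect it to be the main obstacle. Riesz--Thorin requires a single operator defined on $\ell^2+\ell^\infty=\ell^\infty$, and the $\ell^\infty$ operator $B$ is exactly such an operator, agreeing with $B|_{\ell^2(\Z)}$ on $\ell^2(\Z)$. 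The two endpoint hypotheses hold, so the theorem yields boundedness of $B$ on $\ell^q(\Z)$ for $q\in[2,\infty]$, with the displayed bound for the $\ell^q$-restriction of this operator.

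Finally, apply this pointwise in $t$. Measurability of $t\mapsto Bu(t)$ in $\ell^q(\Z)$ follows as in Step 2 for $q<\infty$. For $q=\infty$ one can argue componentwise, since each coordinate $(Bu(t))_n$ is a measurable scalar function; this is the sense relevant to the later uses in the paper. Integrating the pointwise bound $\norm{Bu(t)}_{\ell^q}^q$ over $t$ (or taking the essential supremum when $q=\infty$) gives $\norm{Bu}_{L^q(\R;\ell^q(\Z))}\le C_q\norm{u}_{L^q(\R;\ell^q(\Z))}$.
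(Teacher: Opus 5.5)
Your proposal is correct and follows essentially the same route as the paper: for (a) you apply the $\ell^2$-bound of $B$ pointwise in $t$ to $u$ and $\dot u$ and use injectivity on $\ell^2(\Z)$ for a.e.\ $t$, and for (b) you use Riesz--Thorin between the $\ell^2$ and $\ell^\infty$ bounds and then integrate in $t$. Your closed-graph argument deriving $\ell^2$-boundedness from assumption (L4), and your remarks on weak derivatives, measurability and the $q=\infty$ endpoint, supply details that the paper takes for granted; the paper simply treats $B$ as bounded on $\ell^2(\Z)$, as stated in its introduction.
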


\begin{proof}
(a) For boundedness, we take $u \in H^1(\R;\ell^2(\Z))$ and calculate
    \begin{align*}
        \norm{Bu}_{H^1(\R;\ell^2(\Z))}^2
        &= \int_\R \norm{(B\dot{u})(t)}_{\ell^2(\Z)}^2 \der t+ \int_\R \norm{(Bu)(t)}_{\ell^2(\Z)}^2 \der t   \\
        &\leq  \norm{B}_{\mathrm{op},\ell^2}^2\left(\int_\R \norm{\dot{u}(t)}_{\ell^2(\Z)}^2 \der t+ \int_\R \norm{u(t)}_{\ell^2(\Z)}^2 \der t \right)  \\
        &= \norm{B}_{\mathrm{op},\ell^2}^2 \norm{u}_{H^1(\R;\ell^2(\Z))}^2.
    \end{align*}
For injectivity, suppose $0=\norm{Bu}_{H^1(\R;\ell^2(\Z))}^2 \geq \int_\R \norm{(Bu)(t)}_{\ell^2(\Z)}^2 \der t$. This forces $\norm{(Bu)(t)}_{\ell^2(\Z)}=0$ for almost all $t\in \R$ so that injectivity of $B: \ell^2(\Z)\to \ell^2(\Z)$ implies $u(t)=0$ for almost all $t\in \R$.

(b) First, since $B$ is bounded on $\ell^2(\Z)$ and on $\ell^\infty(\Z)$, we obtain by Riesz--Thorin interpolation theorem, see \cite{lunardi}, that $B:\ell^q(\Z) \to \ell^q(\Z)$ is bounded also for all $q \in (2,\infty)$. As in \ref{lem:Bbounded:a} we estimate for all $u \in L^q(\R;\ell^q(\Z))$ 
     \begin{align*}
        \norm{Bu}_{L^q(\R;\ell^q(\Z))}^q= \int_\R \norm{(Bu)(t)}_{\ell^q(\Z)}^q \der t\leq \norm{B}_{\mathrm{op},\ell^q}^q \int_\R \norm{u(t)}_{\ell^q(\Z)}^q  \der t =\norm{B}_{\mathrm{op},\ell^q}^q \norm{u}_{L^q(\R;\ell^q(\Z))}^q. &\qedhere
    \end{align*}  
\end{proof}

A similar statement holds for extensions of the operator $M: \ell^2(\Z)\to \ell^2(\Z)$ as the next lemma shows. It also shows a commutator property for $\sqrt{M}$ which will help us to see that if \eqref{eq:main} is invariant under translations in space, then our variational problem has the same invariance.
\begin{lemma}\label{lem:Mbounded} \label{lem:operatorcommute}
If $M$ is as in assumption \ref{as:M} then the following holds:
\begin{itemize}
    \item[(i)] $M: H^k(\R; \ell^2(\Z)) \to H^k(\R;\ell^2(\Z))$ is bounded for $k=0,1,2$.
    \item[(ii)] If $MS_N=S_NM$ as in assumption~\ref{as:f:pernt} or \ref{as:f:loct:pern} then $ \sqrt{M}S_N=S_N \sqrt{M}$.
\end{itemize}
\end{lemma}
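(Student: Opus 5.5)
For (i) the plan is to imitate the proof of Lemma~\ref{lem:Bbounded}\ref{lem:Bbounded:a}. Since $M$ is a bounded operator on $\ell^2(\Z)$ that does not depend on $t$, it commutes with weak time differentiation. For $u\in H^k(\R;\ell^2(\Z))$ and $j\le k$ I would show that $\partial_t^j(Mu)=M\partial_t^j u$ in the weak sense. To see this, take a test function $\varphi\in C_c^\infty(\R)$ and use that bounded linear operators pass through Bochner integrals:
\[
\int_\R (Mu)(t)\varphi^{(j)}(t)\der t = M\int_\R u(t)\varphi^{(j)}(t)\der t = (-1)^j M\int_\R \partial_t^j u(t)\varphi(t)\der t = (-1)^j\int_\R (M\partial_t^j u)(t)\varphi(t)\der t .
\]
Pointwise in $t$ we then have $\norm{(M\partial_t^j u)(t)}_{\ell^2}\le \norm{M}_{\mathrm{op},\ell^2}\norm{\partial_t^j u(t)}_{\ell^2}$. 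Squaring, integrating over $\R$ and summing over $j=0,\dots,k$ gives
\[
\norm{Mu}_{H^k(\R;\ell^2(\Z))}\le \norm{M}_{\mathrm{op},\ell^2}\norm{u}_{H^k(\R;\ell^2(\Z))}.
\]
Strong measurability of $t\mapsto Mu(t)$ is automatic, since $M$ is continuous.

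For (ii) I would use the functional calculus. By \ref{as:M}, $M$ is bounded, self-adjoint and positive, so $\sigma(M)\subset[0,\norm{M}]$. The square root $\sqrt{M}$ is the unique positive square root. It can be written as a norm limit $\sqrt{M}=\lim_{k\to\infty}p_k(M)$, where $(p_k)$ are polynomials converging uniformly to $\sqrt{\cdot}$ on $[0,\norm{M}]$; such polynomials exist by Weierstrass. Since $MS_N=S_NM$, induction gives $M^jS_N=S_NM^j$ for every $j$, hence $p_k(M)S_N=S_Np_k(M)$ for every $k$. Letting $k\to\infty$ in operator norm, and using that $S_N$ is bounded (it is in fact an isometry), we get $\sqrt{M}S_N=S_N\sqrt{M}$.

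An alternative route is to note that $S_N$ is unitary with $S_N^*=S_{-N}$. Then $S_{-N}\sqrt{M}S_N$ is positive and squares to $S_{-N}MS_N=M$, so by uniqueness of the positive square root it equals $\sqrt{M}$.

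There is no real obstacle in either part. The only point needing care is justifying that weak derivatives commute with $M$ in the Bochner setting, which is exactly the interchange of a bounded operator with the Bochner integral shown above.
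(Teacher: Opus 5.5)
Your proposal is correct and follows essentially the same route as the paper. For (i) the paper's proof simply refers back to the pointwise operator-norm estimate of Lemma~\ref{lem:Bbounded}. For (ii) it cites the functional-calculus fact that continuous functions of $M$ commute with every bounded operator commuting with $M$, which is exactly what your Weierstrass polynomial approximation proves; your unitary-conjugation argument via uniqueness of the positive square root is an equally valid alternative. Your check that $\partial_t^j(Mu)=M\partial_t^j u$, via interchanging $M$ with the Bochner integral, fills in a detail the paper leaves implicit.
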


\begin{proof}
We omit the proof of (i) since it is based on the same arguments as the proof of Lemma~\ref{lem:Bbounded}. The proof of (ii) follows from \cite[Theorem 4.1.3]{kr}.
\end{proof}

With these observations at hand we introduce the wave operator $L$ as
\begin{align*}
    L \coloneqq \frac{1}{d(t)}\frac{\der^2}{\der t^2} +M: H^2(\R;\ell^2(\Z)) \subset L^2_d(\R; \ell^2(\Z)) \to L^2_d(\R; \ell^2(\Z)).
\end{align*}
We note that $L$ is bounded from above and self-adjoint and $\sigma(L)= \sigma(\frac{1}{d(t)}\frac{\der^2}{\der t^2})+\sigma(M)$, cf. \cite[Corollary to Theorem VIII.33]{ReedSimon_Vol1}\footnote{Here we use that $L^2_d(\R;\ell^2(\Z))$ is isomorphic to the Hilbert tensor product $L^2_d(\R)\otimes\ell^2(\Z)$ and likewise $H^k(\R; \ell^2(\Z))$ is isomorphic to the Hilbert tensor product $H^k(\R)\otimes\ell^2(\Z)$ for $k=1,2$. Since the operators $\frac{1}{d(t)}\frac{\der^2}{\der t^2}$ on $H^2(\R)$ and $M$ on $\ell^2(\Z)$ extend to the Hilbert tensor product $H^2(\R)\otimes\ell^2(\Z)$ as $\frac{1}{d(t)}\frac{\der^2}{\der t^2}\otimes \Id$ and $\Id\otimes M$ we get that $\sigma(L)= \overline{\sigma(\frac{1}{d(t)}\frac{\der^2}{\der t^2})+\sigma(M)}= \sigma(\frac{1}{d(t)}\frac{\der^2}{\der t^2})+\sigma(M)$ since $\sigma(M)$ is compact.}. Moreover, assumption \ref{as:specgap} implies $0\not \in \sigma(L)$.
The quadratic form $b_L:\mathcal{H} \times \mathcal{H} \to \R$ of $L$ is given by
\begin{align*}
    b_L(u,\varphi)=\int_\R \sum_{n \in \Z}\left( - \frac{1}{d(t)} \dot{u}_n(t) \dot{\varphi}_n(t) + (\sqrt{M}u)_n(t) (\sqrt{M}\varphi)_n(t)  \right) d(t) \der t
\end{align*}
on the Hilbert space 
\begin{align*}
\mathcal{H}\coloneqq H^1(\R;\ell^2(\Z))= \{ u: \R \to \ell^2(\Z) \text{ s.t. } \norm{u} < \infty \}
\end{align*}
equipped with its canonical norm
\begin{align}\label{eq:norm}
    \|u\|^2\coloneqq \int_\R \sum_{n \in \Z}  \bigl(|\dot u_n(t)|^2+ |u_n(t)|^2\bigr) \der t.
\end{align}
We also consider the self-adjoint operator $|L|: H^2(\R;\ell^2(\Z)) \subset L^2_d(\R; \ell^2(\Z)) \to L^2_d(\R; \ell^2(\Z))$ and its quadratic form $b_{|L|}:\mathcal{H} \times \mathcal{H} \to \R$. By Kato's second representation theorem, cf. \cite[Theorem 2.23, Chapter $6$, §2]{kato}, we know that $D(\sqrt{|L|})=D(b_{|L|})=\mathcal{H}$ and 
\begin{align*}
    b_{|L|}(u,v)=\langle \sqrt{|L|} u,  \sqrt{|L|} v \rangle_{L^2_d(\R; \ell^2(\Z))} \quad \mbox{ for all } u,v \in \mathcal{H}.
\end{align*}
Since the operator $|L|$ is strictly positive by assumption \ref{as:specgap}, we define a norm on $\mathcal{H}$ via
\begin{align*}
    \|u\|_{\mathcal{H}}^2\coloneqq b_{|L|}(u,u)= \norm{\sqrt{|L|}u}_{L^2_d(\R; \ell^2(\Z))}^2  \quad \text{ for } u \in \mathcal{H}.
\end{align*}
This norm is equivalent to the canonical norm $\norm{\cdot}$ on $\mathcal{H}$ introduced in \eqref{eq:norm} as the following lemma shows.

\begin{lemma}
    There exist constants $c_1,c_2>0$ such that
    \begin{align}\label{eq:equinorm}
     c_1   \norm{u} \leq  \norm{u}_{\mathcal{H}} \leq c_2 \norm{u}
    \end{align}
    for all $u \in \mathcal{H}$.
\end{lemma}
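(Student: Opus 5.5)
The plan is to compare both norms with the form norm of an auxiliary positive operator. Set $A \coloneqq -\frac{1}{d(t)}\frac{\der^2}{\der t^2} + M + \Id$ on $L^2_d(\R;\ell^2(\Z))$ with domain $H^2(\R;\ell^2(\Z))$; it is self-adjoint and $A\geq \Id$. Its quadratic form is
\begin{align*}
b_A(u,u)=\int_\R \sum_{n\in\Z}\Bigl(|\dot u_n|^2 + d(t)\,|(\sqrt{M}u)_n|^2 + d(t)\,|u_n|^2\Bigr)\der t .
\end{align*}
By \ref{as:dbdd} and boundedness of $M$ (Lemma~\ref{lem:Mbounded}), this is equivalent to $\norm{u}^2$ on $\mathcal{H}$. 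The lower bound uses $d\geq \essinf d>0$ and $M\geq 0$; the upper bound uses $\norm{d}_\infty$ and $\norm{\sqrt{M}}_{\mathrm{op}}$. So it suffices to show that $b_{|L|}(u,u)$ and $b_A(u,u)$ are comparable on $\mathcal{H}$, i.e. that $\sqrt{|L|}$ and $\sqrt{A}$ have the same domain $\mathcal{H}$ with equivalent graph norms.

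\textbf{Upper bound.} The identity $|L|^2=L^2$ gives, on $H^2(\R;\ell^2(\Z))$,
\begin{align*}
\norm{|L|u}_{L^2_d}=\norm{Lu}_{L^2_d}\leq \norm{(A-\Id-2M)u}_{L^2_d}\leq C\norm{Au}_{L^2_d},
\end{align*}
since $-L = A-\Id-2M$ and $M$ is bounded. Hence $|L|^2\leq C^2A^2$ in the form sense on $D(A)=D(|L|)$. Löwner--Heinz operator monotonicity of $s\mapsto s^{1/2}$ then gives $|L|\leq C A$ as forms. Therefore $\norm{u}_{\mathcal{H}}^2\leq C\,b_A(u,u)\leq c_2^2\norm{u}^2$.

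\textbf{Lower bound.} Because $0\notin\sigma(L)$ by \ref{as:specgap}, $|L|$ has a bounded inverse, and $\norm{Lu}_{L^2_d}\geq \delta\norm{u}_{L^2_d}$ with $\delta=\operatorname{dist}(0,\sigma(L))>0$. Then
\begin{align*}
\norm{Au}_{L^2_d}\leq \norm{Lu}_{L^2_d}+C\norm{u}_{L^2_d}\leq (1+C/\delta)\norm{|L|u}_{L^2_d}.
\end{align*}
So $A^2\leq C'|L|^2$, and again by Löwner--Heinz $A\leq \sqrt{C'}\,|L|$. This yields $c_1^2\norm{u}^2\leq b_A(u,u)\leq \sqrt{C'}\norm{u}_{\mathcal{H}}^2$. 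The inequalities are first obtained on the core $H^2(\R;\ell^2(\Z))$ and then extended to $\mathcal{H}$ by density, using Kato's representation $D(\sqrt{|L|})=\mathcal{H}$.

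\textbf{Main obstacle.} The delicate step is the rigorous passage from the operator estimates $A^2\leq C|L|^2$ and its converse to the form estimates. This needs the domains of $A$ and $|L|$ to coincide, which holds since $D(|L|)=D(L)=H^2=D(A)$, and it needs the Löwner--Heinz inequality for unbounded positive operators.

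A more elementary alternative avoids Löwner--Heinz. Write $|L|=L(P_+-P_-)$ with the spectral projections $P_\pm$ of $L$, and note that $b_{|L|}(u,u)=b_L(u_+,u_+)-b_L(u_-,u_-)$. One then bounds $|b_L(v,v)|\leq C\norm{v}^2$ directly from the explicit formula for $b_L$. The remaining task is to show that $P_\pm$ are bounded on $\mathcal{H}$, and this can be read off from the fact that $P_\pm$ commute with $(|L|+1)^{-1/2}$.
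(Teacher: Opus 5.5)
Your main argument is correct, and it takes a genuinely different route from the paper.

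\textbf{The paper's route.} The paper writes $L=L_1+L_2$ with $L_1=\frac{1}{d(t)}\partial_t^2$ and $L_2=M$, and uses that these commute (they act on different variables). It then quotes a triangle inequality for commuting self-adjoint operators, $|L_1+L_2|\le |L_1|+|L_2|=-L_1+L_2$ in the Loewner order, which gives the upper bound at once. The lower bound is proved by contradiction: $\inf\sigma(|L|)>0$ forces the $L^2$-part of a normalized sequence to vanish, and the reverse inequality $|L_1+L_2|\ge |L_1|-|L_2|=-L_1-L_2$ then recovers the derivative part.

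\textbf{Your route.} You compare $|L|$ with the coercive operator $A=-L+\Id+2M$. It has the same operator domain $H^2(\R;\ell^2(\Z))$, and its form is visibly equivalent to $\norm{\cdot}^2$. The graph-norm bounds $\norm{Lu}_{L^2_d}\le C\norm{Au}_{L^2_d}$ and $\norm{Au}_{L^2_d}\le (1+C/\delta)\norm{Lu}_{L^2_d}$ need only that $L+A=\Id+2M$ is bounded and that $0\notin\sigma(L)$. The Heinz inequality then passes them to square roots: for nonnegative self-adjoint $S,T$, if $D(T)\subset D(S)$ and $\norm{Su}\le\norm{Tu}$ on $D(T)$, then $D(T^{1/2})\subset D(S^{1/2})$ and $\norm{S^{1/2}u}\le\norm{T^{1/2}u}$.

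\textbf{What each buys.} Your route uses no commutativity of $L_1$ and $L_2$. Your constants are explicit in $\delta$, $\norm{M}_{\mathrm{op},\ell^2}$, $\norm{d}_{L^\infty}$ and $\essinf d$, rather than coming from a contradiction argument. Heinz also delivers the domain identity $D(\sqrt{|L|})=D(\sqrt{A})=\mathcal{H}$ as a by-product rather than as an input, so your closing density step is unnecessary. The price is the Heinz inequality for unbounded operators, in place of the paper's functional-calculus inequality for commuting operators.

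\textbf{On your final ``elementary alternative''.} It should not be relied on as written. That $P_\pm$ commute with $(|L|+1)^{-1/2}$ only makes $P_\pm$ bounded in the $\sqrt{|L|}$-graph norm. Transferring this to $\norm{\cdot}$ is exactly the equivalence under proof, so the argument is circular. A non-circular reason is that $M$ commutes with $L$, so $P_\pm$ commute with $M$ and hence with $A$; then $P_\pm$ are contractions for $b_A$. Even so, that sketch only yields the upper bound. Your main proof does not depend on it.
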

\begin{proof}
For readability we use the notation $L=L_1+L_2$ where
\begin{align*}
 L_1:& H^2(\R;\ell^2(\Z)) \subset L^2_d(\R; \ell^2(\Z)) \to L^2_d(\R;\ell^2)(\Z)), \hspace*{-4em}  &&u \mapsto L_1u=\frac{1}{d(t)} \frac{\der^2}{\der t^2} u , \\
L_2:& L^2_d(\R; \ell^2(\Z)) \to L^2_d(\R;\ell^2(\Z)), &&u \mapsto L_2u=M u
\end{align*}
and we have already seen that $L_1$ and $L_2$ are self-adjoint with $L_2$ being bounded.
Furthermore, since the domains of $L_2L_1$ and $L_1L_2$ fulfill
\begin{align*}
    H^2(\R;\ell^2(\Z))= D(L_2L_1)=D(L_1 L_2)
\end{align*}
and since $L_1, L_2$ commute we can utilize \cite[Theorem 3.1]{boucif} to get
\begin{align*}
    |L_1+L_2| \leq |L_1| + |L_2|= -L_1+L_2
\end{align*}
in the sense of the Loewner ordering. Hence we can estimate
\begin{align*}
\norm{u}_\mathcal{H}^2=\norm{\sqrt{|L|} u}_{L^2_d(\R; \ell^2(\Z))}^2 &\leq \norm{\sqrt{-L_1}u}_{L^2_d(\R; \ell^2(\Z))}^2 + \norm{\sqrt{L_2}u}_{L^2_d(\R; \ell^2(\Z))}^2  \\
&\leq c \left( \norm{\dot u}_{L^2(\R; \ell^2(\Z))}^2+ \norm{ u}_{L^2(\R; \ell^2(\Z))}^2  \right) \\
&=c \norm{u}^2
\end{align*}
for a constant $c>0$ and all $u \in \mathcal{H}$. This shows one part of inequality \eqref{eq:equinorm} with $c_2=\sqrt{c}$.

\medskip

Next we claim that there exists a constant $c_1>0$ such that
\begin{align*}
   \norm{u}_{\mathcal{H}}^2=\norm{\sqrt{|L|} u}_{L^2_d(\R;\ell^2(\Z))}^2 \geq c_1^2\left(\norm{\dot u}_{L^2(\R; \ell^2(\Z))}^2 + \norm{u}_{L^2(\R;\ell^2(\Z))}^2 \right)
\end{align*}
for all $u  \in \mathcal{H}$. Suppose the claim is wrong, i.e., for every $j \in \N$ there exists $u^j \in \mathcal{H}$ such that
\begin{align}\label{eq:loewner}
 \norm{u^j}_\mathcal{H}^2=  \norm{\sqrt{|L|} u^j}_{L^2_d(\R\; \ell^2(\Z))}\to 0 \quad \mbox{ and }  \quad \norm{\dot u^j}_{L^2(\R; \ell^2(\Z))}^2 + \norm{u^j}_{L^2(\R;\ell^2(\Z))}^2=1.
\end{align}
Since $\norm{\sqrt{|L|}u^j}_{L^2_d(\R\; \ell^2(\Z))}^2 \geq \inf\sigma(|L|)\norm{u^{j}}_{L^2(\R; \ell^2(\Z))}^2$ and $\inf\sigma(|L|)>0$ by assumption \ref{as:specgap} we find $\norm{u^j}_{L^2(\R;\ell^2(\Z))} \to 0$. Together with \eqref{eq:loewner} this implies $\norm{\dot u^j}_{L^2(\R; \ell^2(\Z))}\to 1$ as $j\to \infty$ and (by boundedness of $L_2=M$) also $\norm{\sqrt{L_2}u^j}_{L^2_d(\R;\ell^2(\Z))} \to 0$ as $j \to \infty$. As before, in the sense of the Loewner ordering we have $|L_1+L_2| \geq |L_1| - |L_2|= -L_1-L_2$ so that we get the contradiction
\begin{align*}
   0 \stackrel{j\to \infty}{\longleftarrow} \norm{u^j}_{\mathcal{H}}^2 = \norm{\sqrt{|L_1 +L_2|}u^j}_{L^2_d(\Z \times \R)}^2 
    & \geq \norm{\sqrt{-L_1}u^j}_{L^2_d(\Z \times \R)}^2 - \norm{\sqrt{L_2}u^j}_{L^2_d(\Z \times \R)}^2\\
    & = \norm{\dot u^j}_{L^2(\Z \times \R)}^2 - \norm{\sqrt{L_2}u^j}_{L^2_d(\Z \times \R)}^2\stackrel{j\to \infty}{\longrightarrow} 1.
    \qedhere
\end{align*}
\end{proof}

For the variational approach used in this paper we consider the functionals
\begin{align*}
    J_0(u)&\coloneqq \frac12 b_L(u,u)=\int_\R \sum_{n \in \Z} \left(-\frac{1}{2d(t)} | \dot{u}_n(t)|^2 + \frac{1}{2} |(\sqrt{M} u)_n(t)|^2 \right)d(t) \der t, \\
    J_1(u)&\coloneqq \int_\R\sum_{n \in \Z} F(n,t,(Bu)_n(t)) d(t)\der t, \\
    J(u) & \coloneqq J_0(u)-J_1(u)
\end{align*}   
for $u \in \mathcal{H}$. While $J_0$ is well-defined on $\mathcal{H}$ we show next (see Theorem~\ref{theom:emb} below) that also $J_1$ and hence $J$ are well-defined on $\mathcal{H}$. It is then standard to show the continuous Fr\'{e}chet-differentiability of $J$ and thus critical points of $J$ are weak solutions of \eqref{eq:main} in the following sense.
\begin{definition}\label{def:weak_sol}
     A function $u\in \mathcal{H}$ is called a weak solution to \eqref{eq:main} if
     $$    
     \int_\R \sum_{n \in \Z} \left(- \dot u_n(t) \dot\varphi_n(t) + (\sqrt{M}u)_n(t) (\sqrt{M}\varphi)_n(t) d(t)\right)\der t = \int_\R \sum_{n\in\Z} f(n,t,(Bu)_n(t)) (B\varphi)_n(t) d(t) \der t
     $$
holds for all $\varphi\in \mathcal{H}$.
 \end{definition}
 As we shall see later, once we have a weak solution $u\in \mathcal{H}$ it is not difficult to show $u\in H^2(\R;\ell^2(\Z))$.

\begin{theorem}\label{theom:emb} Let $q_1,q_2\in [2,\infty]$. The embedding $\mathcal{H}\hookrightarrow C_0(\R;\ell^{q_2}(\Z))$ is continuous. If $\mathcal{R}\subset \R$ is an interval and $\mathcal{Z}\subset \Z$ then the embeddings $\mathcal{H}\hookrightarrow L^{q_1}(\mathcal{R}; \ell^{q_2}(\mathcal{Z}))$, $\mathcal{H}\hookrightarrow C(\mathcal{R};\ell^{q_2}(\mathcal{Z}))$ are continuous. They are compact whenever $\mathcal{R}$ and $\mathcal{Z}$ are additionally bounded.
\end{theorem}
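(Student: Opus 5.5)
The plan is to reduce everything to the one-dimensional Sobolev embedding $H^1(\R)\hookrightarrow C_0(\R)$, applied in the vector-valued setting, together with the elementary inclusion $\ell^2(\mathcal{Z})\hookrightarrow \ell^{q_2}(\mathcal{Z})$ with norm at most one for $q_2\ge 2$. First, for $u\in\mathcal{H}=H^1(\R;\ell^2(\Z))$, the vector-valued fundamental theorem of calculus holds for Bochner–Sobolev functions: $u$ has a continuous representative with $u(t)-u(s)=\int_s^t \dot u(\tau)\der\tau$ in $\ell^2(\Z)$. This gives the H\"older bound $\norm{u(t)-u(s)}_{\ell^2}\le |t-s|^{1/2}\norm{\dot u}_{L^2(\R;\ell^2)}$. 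The scalar function $\phi(t)=\norm{u(t)}_{\ell^2}^2$ is absolutely continuous with $\dot\phi=2\langle u,\dot u\rangle_{\ell^2}$, hence $\phi\in W^{1,1}(\R)$. This yields $\sup_t\norm{u(t)}_{\ell^2}^2\le \norm{u}^2$ and $\phi(t)\to 0$ as $|t|\to\infty$. Therefore $\mathcal{H}\hookrightarrow C_0(\R;\ell^2(\Z))$ continuously, and composing with $\ell^2\hookrightarrow\ell^{q_2}$ gives the first claim.

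For an interval $\mathcal{R}$ and $\mathcal{Z}\subset\Z$, restriction gives $C(\mathcal{R};\ell^{q_2}(\mathcal{Z}))$ continuity with the sup norm. For the $L^{q_1}$ claim we interpolate: $\int_{\mathcal{R}}\norm{u(t)}_{\ell^{q_2}}^{q_1}\der t\le \sup_t\norm{u(t)}_{\ell^2}^{q_1-2}\int_\R\norm{u(t)}_{\ell^2}^2\der t\le \norm{u}^{q_1}$. The case $q_1=\infty$ is the sup bound.

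Compactness for bounded $\mathcal{R}$ (take its closure, a compact interval) and finite $\mathcal{Z}$ is the only substantive step. Take a bounded sequence $(u^j)$ in $\mathcal{H}$. Restricted to $\mathcal{Z}$, the values $u^j(t)$ lie in the finite-dimensional space $\R^{|\mathcal{Z}|}$ and are uniformly bounded. By the H\"older bound above they are also equicontinuous on $\overline{\mathcal{R}}$. Arzel\`a–Ascoli therefore yields a subsequence converging in $C(\overline{\mathcal{R}};\ell^{q_2}(\mathcal{Z}))$, all norms on $\R^{|\mathcal{Z}|}$ being equivalent. Since $\mathcal{R}$ is bounded, this convergence implies convergence in $L^{q_1}(\mathcal{R};\ell^{q_2}(\mathcal{Z}))$.

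The main obstacle is that Arzel\`a–Ascoli in the infinite-dimensional target $\ell^2$ would require pointwise relative compactness, which fails for general $\mathcal{Z}$. Finiteness of $\mathcal{Z}$ (which is what ``bounded'' means for subsets of $\Z$) is exactly what supplies that compactness, so the argument uses it essentially and makes the reduction to $\R^{|\mathcal{Z}|}$.
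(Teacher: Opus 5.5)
Your proof is correct and follows essentially the same route as the paper: the H\"older estimate $\|u(t)-u(s)\|_{\ell^2}\le |t-s|^{1/2}\|\dot u\|_{L^2(\R;\ell^2(\Z))}$ gives continuity and equicontinuity, Arzel\`a--Ascoli on the finite-dimensional target $\ell^{q_2}(\mathcal{Z})$ gives compactness, and integrability of $\frac{\mathrm{d}}{\mathrm{d}t}\|u(t)\|_{\ell^2}^2=2\langle u(t),\dot u(t)\rangle_{\ell^2}$ gives decay as $|t|\to\infty$. The only deviation is in the $L^{q_1}(\mathcal{R};\ell^{q_2}(\mathcal{Z}))$ bound: you interpolate the uniform bound $\sup_t\|u(t)\|_{\ell^2}\le\|u\|$ with the $L^2$ bound, whereas the paper sums the scalar embeddings $H^1(\R)\hookrightarrow L^{q_1}(\mathcal{R})$ over $n$; your version handles the mixed case $q_1\neq q_2$ without further argument.
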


\begin{proof}
The continuous embedding $H^1(\R) \hookrightarrow L^{q_1}(\mathcal{R})$ translates by summation over $n\in \mathcal{Z}$ and $\ell^{2}(\mathcal{Z})\hookrightarrow \ell^{q_2}(\mathcal{Z})$ to the embedding $\mathcal{H}\hookrightarrow L^{q_1}(\mathcal{R}; \ell^{q_2}(\mathcal{Z}))$. Moreover, for $t,s\in \mathcal{R}$ and $n\in \mathcal{Z}$ the estimate
$$
|u_n(t)-u_n(s)| \leq \sqrt{|t-s|} \|\dot u_n\|_{L^2(\mathcal{R})} 
$$
combined with summation over $n\in \mathcal{Z}$ and $\ell^{2}(\mathcal{Z})\hookrightarrow \ell^{q_2}(\mathcal{Z})$ yields that $u\in C(\mathcal{R}; \ell^{q_2}(\mathcal{Z}))$. Compactness follows from the Marcel Riesz/Arz\'{e}la--Ascoli criterion whenever $\mathcal{R}\subset \R$ and $\mathcal{Z}\subset \Z$ are bounded. To see that $u\in C_0(\R;\ell^{q_2}(\Z))$ note that 
$$
\sum_{n\in \mathcal{Z}} u_n(t)^2 = \sum_{n\in \mathcal{Z}} u_n(0)^2+2\int_0^t \sum_{n\in \mathcal{Z}} u_n(\tau) \dot u_n(\tau)\der \tau.
$$
Since the integral converges this identity implies that $\lim_{|t|\to \infty} \|u(t)\|_{\ell^2}$ exists and equals $0$. Finally, by the $\ell^{q_2}(\Z) \hookrightarrow \ell^2(\Z)$ embedding we obtain $u\in C_0(\R;\ell^{q_2}(\Z))$.
\end{proof}

We continue with collecting some properties of the functions $f$ and $F$ based on our assumptions.
\begin{lemma}\label{lem:est_on_f} The following properties of the functions $f$ and $F$ hold.
    \begin{enumerate}[(i)]
    \item $\frac{1}{2}f(n,t,s)s > F(n,t,s)>0$ for all $s \neq 0$ and $(n,t)\in \Z \times \R$.\label{lem:est_on_f:fF}
        \item For every $\varepsilon >0$ there exists $C_\varepsilon >0$ such that $|f(n,t,s)|\leq \varepsilon |s| + C_\varepsilon |s|^p$ and $|F(n,t,s)| \leq \varepsilon |s|^2+ C_\varepsilon |s|^{p+1}$ for all $(n,t,s) \in \Z \times \R \times \R$.\label{lem:est_on_f:gen_est}
        \item Let $\rho>0$ and $r\geq 1$ be fixed. For every $\varepsilon >0$ there exists a bounded function $h_\varepsilon:\Z \times \R \to [0,\infty)$ such that
        \begin{align*}
            |f(n,t,s)|^r \leq \varepsilon |s|^r + h_\varepsilon(n,t)|s|^{r(p+\rho)} \quad \mbox{ for all } (n,t,s) \in \Z \times \R \times \R
        \end{align*} 
        and
        \begin{enumerate}
            \item $\lim_{|(n,t)| \to \infty} h_\varepsilon(n,t)=0$ if $f$ satisfies assumption~\ref{as:f:locnt}, 
            \item $\lim_{|n| \to \infty} h_\varepsilon(n,t)=0$ uniformly in $t \in \R$ and $h_\varepsilon (n,t)=h_\varepsilon(n,t+T)$ for all $n \in \Z$, $t \in \R$ if $f$ satisfies assumption~\ref{as:f:locn:pert}. 
            \item $\lim_{|t| \to \infty} h_\varepsilon(n,t)=0$ uniformly in $n \in \N$ and $h_\varepsilon (n,t)=h_\varepsilon(n+N,t)$ for all $n \in \Z$, $t \in \R$ if $f$ satisfies assumption~\ref{as:f:loct:pern}.
        \end{enumerate}    \label{lem:est_on_f:eps}
        \item If $\mathcal{Z}\subset\Z$ and $\mathcal{R}\subset \R$ is an interval then the Nemytskii operator $\tilde{f}: \mathcal H \to L^2(\mathcal{R}; \ell^2(\mathcal{Z}))$ defined by $\tilde f(u)(n,t)\coloneqq f(n,t,u_n(t))$ for all $n\in \mathcal{Z}$ and $t\in \mathcal{R}$ is continuous and it is compact whenever $\mathcal{R}$ and $\mathcal{Z}$ are additionally bounded.
      \end{enumerate}
\end{lemma}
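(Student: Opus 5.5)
We prove the four items in order, since (ii) feeds into (iii) and (iii) feeds into (iv).

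\emph{Item (i).} Fix $(n,t)$ and $s>0$ (the case $s<0$ is symmetric). By \ref{as:f2}, $f(n,t,\sigma)/\sigma\to0$ as $\sigma\to0^+$. By \ref{as:f3} this quotient is strictly increasing on $(0,\infty)$, so $f(n,t,\sigma)/\sigma>0$ for all $\sigma>0$, i.e. $f(n,t,\sigma)>0$ there. This gives $F(n,t,s)>0$. Moreover $f(n,t,\sigma)<\frac{\sigma}{s}f(n,t,s)$ for $0<\sigma<s$. Integrating over $(0,s)$ yields $F(n,t,s)<\frac12 f(n,t,s)s$.

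\emph{Item (ii).} Given $\varepsilon$, use \ref{as:f2} to pick $\delta>0$ with $|f(n,t,s)|\le\varepsilon|s|$ for $|s|\le\delta$, uniformly in $(n,t)$. For $|s|>\delta$, \ref{as:f5} gives $|f|\le C(1+|s|^p)\le C(\delta^{-p}+1)|s|^p$. The bound on $F$ then follows by integration, after renaming the constants.

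\emph{Item (iii).} For $|s|\le\delta$ we have $|f|^r\le\varepsilon|s|^r$ by \ref{as:f2}. For $|s|>\delta$ we set
\[
h_\varepsilon(n,t)\coloneqq\sup_{|s|>\delta}\frac{|f(n,t,s)|^r}{|s|^{r(p+\rho)}}.
\]
This is bounded, since $|f|^r\le C^r(1+|s|^p)^r$ and $\rho>0$ give $h_\varepsilon\le C'\delta^{-r\rho}$-type bounds. The key is the decay and periodicity of $h_\varepsilon$. Periodicity is inherited directly from $f$ in cases (b) and (c). For decay, split the supremum into $\delta<|s|\le R$ and $|s|>R$. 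On $|s|>R$ the quotient is at most $C^r 2^r R^{-r\rho}$, which is small for large $R$. On the compact range $\delta<|s|\le R$, the local uniform convergence of $f$ to $0$ from \ref{as:f:locnt}, \ref{as:f:locn:pert} or \ref{as:f:loct:pern} makes the supremum tend to $0$ in the respective limit. This $\varepsilon/2$-type argument, which needs $\rho>0$ to control large $|s|$, is the only delicate point. (In case (b) the uniformity in $t$ follows from the uniformity on $[0,T]$ plus periodicity.)

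\emph{Item (iv).} Let $u^j\to u$ in $\mathcal H$. By Theorem~\ref{theom:emb}, $u^j\to u$ in $L^2(\mathcal R;\ell^2(\mathcal Z))\cap L^\infty(\R;\ell^\infty(\Z))$, so the $u^j$ are uniformly bounded by some $K$. On $[-K,K]$ the function $f$ is uniformly continuous in $s$, uniformly in $(n,t)$: for compact $(n,t)$-sets by continuity, and elsewhere we would argue via a subsequence and dominated convergence instead. The cleanest route is the following. Item (ii) gives the pointwise bound $|f(n,t,u^j_n)|\le(1+C_1K^{p-1})|u^j_n|$. Pass to a subsequence converging a.e. in $t$ in $\ell^2$ and with an $L^2(\R;\ell^2)$ dominating function (partial converse of dominated convergence, applied to $\|u^j(t)-u(t)\|_{\ell^2}$). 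Then
\[
\|f(u^j(t))-f(u(t))\|_{\ell^2}^2\to0
\]
for a.e. $t$, using termwise continuity together with domination by $\ell^2$-tails. Dominated convergence in $t$ finishes the argument, and the subsequence principle gives convergence of the full sequence. For compactness when $\mathcal R,\mathcal Z$ are bounded, a bounded sequence in $\mathcal H$ has a subsequence converging in $C(\mathcal R;\ell^2(\mathcal Z))$ by Theorem~\ref{theom:emb}. Uniform continuity of $f$ on the compact set $\mathcal Z\times\overline{\mathcal R}\times[-K,K]$ then gives convergence of $\tilde f(u^j)$ in $L^2(\mathcal R;\ell^2(\mathcal Z))$.
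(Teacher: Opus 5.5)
Your proof is correct. Items (i)--(iii) follow the paper's argument essentially verbatim. For (i), the paper likewise integrates the strict monotonicity of $f(n,t,\cdot)/|\cdot|$. For (ii), it combines (f2) and (f5). For (iii), it proves the case $r=1$ and then takes $r$-th powers, with $h_\varepsilon$ defined as a supremum over $|s|\ge R$. It splits at a large threshold $\tilde R$, so that the tail is controlled by $\rho>0$ and the compact range by the local uniform decay of $f$, exactly as you do. Your boundedness constant should be of type $\delta^{-r(p+\rho)}$ rather than $\delta^{-r\rho}$, but this is immaterial.

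The genuine difference is in (iv). The paper combines the growth bound from (ii) with the embeddings $\mathcal H\hookrightarrow L^2(\mathcal R;\ell^2(\mathcal Z))\cap L^{2p}(\mathcal R;\ell^{2p}(\mathcal Z))$. It then cites the standard Krasnosel'skii-type continuity theorem for Nemytskii operators (Ambrosetti--Prodi); compactness is just ``continuous map composed with a compact embedding''.

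You instead use the one-dimensional embedding $\mathcal H\hookrightarrow C_0(\R;\ell^\infty(\Z))$ to get a uniform bound $K$. This reduces $f$ to linear growth $|f(n,t,s)|\le c_K|s|$ on the relevant range. You then run an explicit double dominated-convergence argument: $\ell^2$-tails in $n$ for fixed $t$, and an a.e.\ convergent subsequence with an $L^2$ majorant in $t$. For compactness you use the compact embedding into $C(\mathcal R;\ell^2(\mathcal Z))$ plus uniform continuity of $f$ on $\mathcal Z\times\overline{\mathcal R}\times[-K,K]$.

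Your route is self-contained. It needs only that $f$ be uniformly bounded on $\Z\times\R\times[-K,K]$ for each $K$ (plus (f2)), not polynomial growth. The paper's route is shorter by citation and does not rely on $L^\infty$ control, so it would survive where $H^1\not\hookrightarrow L^\infty$.

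The exploratory aside at the start of your (iv) claims uniform continuity in $s$ uniformly in all $(n,t)$. That is not available in general, but you rightly discard it. Finally, under the compact embedding a weakly convergent sequence converges strongly to its weak limit. So your compactness argument directly gives complete continuity (weak-to-strong), which is the form actually used later in Lemma~\ref{lem:complcont} and Theorem~\ref{thm:groundstate}.
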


\begin{remark}
    With the notation used in \eqref{eq:main} we have for $u\in \mathcal{H}$ the relation that $f^\sharp(t, u(t))= \tilde f(u)(t)$ for almost all $t\in \mathcal{R}$ and $\tilde f(u) = \{ \mathcal{R} \ni t \mapsto f^\sharp(t,u(t))\}$. From a pointwise perspective, one finds $(f^\sharp(t,u))_n=\tilde f (u)(n,t)$ for almost all $t \in \mathcal{R}$ and all $n\in \mathcal{Z}$.
\end{remark}

\begin{proof}
(i) We first consider $s>0$ where $f(n,t,s)>0$ by assumption~\ref{as:f2} and \ref{as:f3}. Since $s \mapsto \frac{f(n,t,s)}{s}$ is strictly increasing by assumption~\ref{as:f3} we have
\begin{align*}
    0<F(n,t,s)= \int_0^1 f(n,t,\xi s)s \der \xi < \int_0^1 f(n,t,s) \xi s \der \xi = \frac{1}{2} f(n,t,s)  s
\end{align*}
for all $(n,t,s) \in \Z \times \R \times (0,\infty)$. Similarly one obtains the estimate for $s<0$. \\
(ii) This follows from assumptions \ref{as:f2} and \ref{as:f5}. \\
(iii) Let $\rho, \varepsilon >0$. It is enough to prove the statement for $r=1$ from which the general case follows by taking the $r$-th power. By assumption~\ref{as:f2} there exists $R=R(\varepsilon)>0$ such that $|f(n,t,s)| \leq \varepsilon |s| $ for all $|s| \leq R$ and $(n,t) \in \Z \times \R$. Using \ref{as:f5} and $\rho >0$ we define $h_{\varepsilon}:\Z \times \R \to [0,\infty)$ as
        \begin{align*}
        h_{\varepsilon}(n,t)\coloneqq\sup_{|s| \geq R} \frac{|f(n,t,s)|}{|s|^{p+\rho}}
        \end{align*}
        which altogether yields $|f(n,t,s)| \leq \varepsilon |s| + h_{\varepsilon}(n,t)|s|^{p+\rho}$ for all $(n,t,s)\in \Z \times \R \times \R$. Moreover, by \ref{as:f5} and since $\rho>0$ there exists $\tilde R= \tilde R(\varepsilon)$ such that
        \begin{align} \label{eq:est_f}
        |f(n,t,s)| \leq C(1+|s|^p)\leq \varepsilon |s|^{p+\rho} \mbox{ for all } |s|\geq \tilde R, (n,t)\in \Z\times \R
        \end{align}
        and w.l.o.g. $\tilde R\geq R$. In particular, $h_\varepsilon$ is bounded.
        In case \ref{as:f:locnt} we have \eqref{eq:est_f} also for $|(n,t)|\geq K(\varepsilon)$ and $R\leq |s|\leq \tilde R$ which shows that $\lim_{|(n,t)| \to \infty} h_{\varepsilon}(n,t)=0$. 
        In case \ref{as:f:locn:pert} we get \eqref{eq:est_f} for $|n|\geq K(\varepsilon)$, $t\in [0,T]$ and $R\leq |s|\leq \tilde R$. Since $f$ and hence $h_\varepsilon$ are periodic in $t$ it follows that $\lim_{|n| \to \infty} h_{\varepsilon}(n,t)=0$ uniformly in $t \in \R$. In case \ref{as:f:loct:pern} the inequality \eqref{eq:est_f} also holds true for $|t| \geq K(\varepsilon)$, $n\in \{0,\ldots, N-1\}$ and $R\leq |s| \leq \tilde R$. By the periodicity of $f$ with respect to $n$ we see $\lim_{|t| \to \infty} h_{\varepsilon}(n,t)=0$ uniformly in $n \in \N$.\\
(iv) This is a consequence of (ii), the continuous/compact embedding results $\mathcal{H}\hookrightarrow L^2(\mathcal{R}; \ell^2(\mathcal{Z}))\cap L^{2p}(\mathcal{R}; \ell^{2p}(\mathcal{Z}))$ from Theorem~\ref{theom:emb} and a dominated convergence argument, see \cite[Chapter 1.2 and Theorem 2.2]{ambrosetti_prodi} for details.
\end{proof}
In view of variational arguments for the functional $J$ the lack of global compactness of the embeddings in Theorem~\ref{theom:emb} needs to bypassed. In case \ref{as:f:locnt} of Theorem~\ref{thm:main} this is done by assuming $f$ to be localized in $(n,t)$. In the case \ref{as:f:pernt} we do this by assuming that \eqref{eq:main} has a periodic structure both in space and time which allows us to use P.L.~Lions' concentration compactness argument, cf. \cite[Lemma~I.1]{Lions_org}. We will prove a version of Lions' argument as given in Theorem~\ref{lem:ccot}. In case \ref{as:f:locn:pert} the equation \eqref{eq:main} has a periodic structure only in time. In this case we give another version of Lions' argument in Theorem~\ref{thm:cc}~\ref{lem:cct}, and later combine it with the localization property of $f$ with respect to the spatial variable. Similarly, with the roles of $n$ and $t$ exchanged, we can argue in case \ref{as:f:loct:pern} using Theorem~\ref{thm:cc}~\ref{lem:ccn}. The proofs of both results are variants of P.L.~Lions' original work as in \cite[Lemma~1.21]{Willem}. In the space-time setting a similar result can be found in \cite[Theorem~2.2]{henninger}.
\begin{theorem}\label{lem:ccot} Let $(u^j)_{j \in \N} \subset \mathcal{H}$ be bounded. Let $T>0$, $N \in \N$ and $q \in [2,\infty)$. Further assume
\begin{align*}
\lim_{j \to \infty}\sup_{(m_1,m_2)\in \Z^2}  \sum_{n\in\mathcal{Z}_{m_1}} \int_{\mathcal{R}_{m_2}}   |u^j_n(t)|^q \dt  =0 
\end{align*}
where we set $\mathcal{Z}_{m_1}=\{m_1N, m_1N+1, \ldots, (m_1+1)N-1 \}$ and $\mathcal{R}_{m_2} = [m_2 T, (m_2+1)T]$. 
Then $u^j \to 0$ in $L^{\tilde{q}}(\R; \ell^{\tilde q}(\Z))$ as $j \to \infty$ for all $\tilde{q} \in (2,\infty)$.
\end{theorem}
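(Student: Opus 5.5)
We follow the proof of Lions' lemma in \cite[Lemma~1.21]{Willem}, using the space-time cells $Q_{m}\coloneqq \mathcal{Z}_{m_1}\times\mathcal{R}_{m_2}$, $m=(m_1,m_2)\in\Z^2$. These cells cover $\Z\times\R$; adjacent cells overlap only in single time points, which have measure zero.

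The central ingredient is a local Gagliardo--Nirenberg-type inequality on one cell. For a fixed exponent $r>q$, which we will choose below, we want
\begin{align*}
\Bigl(\sum_{n\in\mathcal{Z}_{m_1}}\int_{\mathcal{R}_{m_2}}|u_n|^{r}\dt\Bigr)^{1/r} \le C\,\Bigl(\sum_{n\in\mathcal{Z}_{m_1}}\int_{\mathcal{R}_{m_2}}|u_n|^{q}\dt\Bigr)^{(1-\lambda)/q}\,\|u\|_{H^1(\mathcal{R}_{m_2};\ell^2(\mathcal{Z}_{m_1}))}^{\lambda}.
\end{align*}
The constant $C$ must not depend on $m$. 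This holds because all cells are translates of $\{0,\dots,N-1\}\times[0,T]$.

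To prove it, fix $n$ and work on the interval $[0,T]$. The one-dimensional embedding $H^1(0,T)\hookrightarrow L^\infty(0,T)$ gives
\[
\|u_n\|_{L^r}^r\le \|u_n\|_{L^\infty}^{r-q}\|u_n\|_{L^q}^q\le C\|u_n\|_{H^1}^{r-q}\|u_n\|_{L^q}^q.
\]
Now choose $r=q+2$, so that $r-q=2$. Summing over the $N$ sites then gives
\[
\sum_{n\in\mathcal{Z}_{m_1}}\|u_n\|_{L^r}^r\le C\,\Bigl(\sup_n\|u_n\|_{L^q}^q\Bigr)\sum_{n}\|u_n\|_{H^1}^2\le C\Bigl(\sum_{n\in \mathcal{Z}_{m_1}}\int_{\mathcal{R}_{m_2}}|u_n|^q\dt\Bigr)\,\|u\|^2_{H^1(\mathcal{R}_{m_2};\ell^2(\mathcal{Z}_{m_1}))}.
\]
This exact form is the key point. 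The $H^1$ factor appears squared, so it is additive over cells.

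Next, sum this cell estimate over all $m\in\Z^2$. For each fixed $n$, the time intervals overlap only at endpoints, and the sets $\mathcal{Z}_{m_1}$ partition $\Z$. Hence $\sum_m \|u\|^2_{H^1(\mathcal{R}_{m_2};\ell^2(\mathcal{Z}_{m_1}))}=\|u\|^2$. Bounding the first factor by its supremum over $m$, we get
\[
\|u^j\|_{L^{q+2}(\R;\ell^{q+2}(\Z))}^{q+2}\le C\,\sup_{m\in\Z^2}\sum_{n\in\mathcal{Z}_{m_1}}\int_{\mathcal{R}_{m_2}}|u^j_n|^q\dt\;\|u^j\|^2.
\]
Since $(u^j)$ is bounded in $\mathcal{H}$, the hypothesis gives $u^j\to0$ in $L^{q+2}(\R;\ell^{q+2}(\Z))$.

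Finally, extend this to all $\tilde q\in(2,\infty)$ by interpolation. By Theorem~\ref{theom:emb}, $(u^j)$ is bounded in $L^2(\R;\ell^2(\Z))$ and in $L^\infty(\R;\ell^\infty(\Z))$, since $\mathcal{H}\hookrightarrow C_0(\R;\ell^\infty(\Z))$. For $\tilde q\in(2,q+2)$, use Hölder's inequality on the product space $\Z\times\R$ with counting times Lebesgue measure:
\[
\|u\|_{\tilde q}\le\|u\|_2^{1-\theta}\|u\|_{q+2}^{\theta}\quad\text{with }\theta>0.
\]
For $\tilde q>q+2$, use instead $\|u\|_{\tilde q}^{\tilde q}\le\|u\|_\infty^{\tilde q-q-2}\|u\|_{q+2}^{q+2}$. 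In both cases the bounded factor times a vanishing factor tends to zero.

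We expect the main obstacle to be the cell inequality in the additive form above. Choosing the exponent so that the $H^1$-norm enters squared is what makes the sum over cells telescope into the global norm. The mixed discrete–continuous structure is harmless, because on a cell only $N$ sites are involved and the Sobolev embedding is purely one-dimensional in $t$.
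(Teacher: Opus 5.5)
Your proof is correct and is essentially the paper's argument. Both use a cellwise inequality in which the $H^1(\mathcal{R}_{m_2};\ell^2(\mathcal{Z}_{m_1}))$-norm enters squared, sum it over all cells against the supremum of the local mass, and finish by Hölder interpolation with the global $L^2$ and $L^\infty$ bounds. The only difference is the exponent bookkeeping: the paper first reduces to $q=2$ and reaches $s=4(r-1)/r\in(2,3)$ by Hölder between $L^2$ and $L^r$ plus the local Sobolev embedding, whereas you use $H^1\hookrightarrow L^\infty$ on each interval to land directly at exponent $q+2$, which spares the preliminary reduction.
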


\begin{proof}
It is enough to show the result for $q=2$ since the case $q>2$ follows from $q=2$ by H\"older's inequality. Likewise, once we have shown the result for $\tilde{q} \in(2,3)$ the remaining cases $\tilde q\in [3,\infty)$ follow by Hölder's inequality and the boundedness of the embedding from Theorem~\ref{theom:emb}. 
In general, for $2<s<r<4$ and $\theta \in (0,1)$ being the solution of $\frac{1}{s}=\frac{\theta}{r}+\frac{1-\theta}{2}$, Hölder's inequality yields
\begin{align*}
    \|u\|_{L^s(\mathcal{R}; \ell^s(\mathcal{Z}))}^s \leq \|u\|_{L^r(\mathcal{R};\ell^r(\mathcal{Z}))}^{\theta s} \|u\|_{L^2(\mathcal{R};\ell^2(\mathcal{Z}))}^{(1-\theta)s}
\end{align*}
whenever $\mathcal{R}\subset \R$ is an interval and $\mathcal{Z}\subset \Z$. Now, for $r \in (2, 4)$ and $s=\frac{4(r-1)}{r}$, we have $2<s<r<4$, $2<s<3$ and $\theta=\frac{2}{s}$ so that we can use the above inequality to get  
\begin{align*}
    \|u\|_{L^s(\R; \ell^s(\Z))}^s&=\sum_{(m_1,m_2)\in \Z^2} \| u \|_{L^s(\mathcal{R}_{m_2}; \ell^s(\mathcal{Z}_{m_1}))}^s \\
    &\leq \sup_{(m_1,m_2)\in \Z^2} \| u \|_{L^2(\mathcal{R}_{m_2}; \ell^2(\mathcal{Z}_{m_1}))}^{s-2} \sum_{(m_1,m_2)\in \Z^2} \| u \|_{L^r(\mathcal{R}_{m_2}; \ell^r(\mathcal{Z}_{m_1}))}^2 \\
    & \leq c \sup_{(m_1,m_2)\in \Z^2} \| u \|_{L^2(\mathcal{R}_{m_2}; \ell^2(\mathcal{Z}_{m_1}))}^{s-2} \sum_{(m_1,m_2)\in \Z^2} \| u \|_{H^1(\mathcal{R}_{m_2}; \ell^2(\mathcal{Z}_{m_1}))}^2 
\end{align*}
where we have used the Sobolev Embedding Theorem and the embedding of $\ell^2$ into $\ell^r$.
Altogether we obtain 
\begin{align*}
     \|u\|_{L^s(\R; \ell^s(\Z))}^s\leq c \| u \|_{\mathcal{H}}^2 \sup_{(m_1,m_2)\in \Z^2} \| u \|_{L^2(\mathcal{R}_{m_2}; \ell^2(\mathcal{Z}_{m_1}))}^{s-2}
\end{align*}
which implies the claim of the theorem.
\end{proof}

\begin{theorem}\label{thm:cc}  Let $(u^j)_{j \in \N} \subset \mathcal{H}$ be bounded. 
\begin{enumerate}[(i)]
    \item Let $T>0$, $q \in [2, \infty)$ and $\mathcal{Z}\subset \Z$. Further assume
\begin{align*}
\lim_{j \to \infty}\sup_{m\in \Z} \sum_{n \in \mathcal{Z}} \int_{\mathcal{R}_m}   |u^j_n(t)|^q \dt  =0 
\end{align*}
where $\mathcal{R}_m = [mT, (m+1)T]$. Then $u^j \to 0$ in $L^{\tilde{q}}(\R; \ell^{\tilde q}(\mathcal{Z}))$ as $j \to \infty$ for all $\tilde{q} \in (2, \infty)$.\label{lem:cct}
\item Let $N\in \N$, $q \in [2, \infty)$ and $\mathcal{R}\subset \R$ be an interval. Further assume
\begin{align*}
\lim_{j \to \infty}\sup_{m\in \Z} \sum_{n \in \mathcal{Z}_m} \int_{\mathcal{R}}   |u^j_n(t)|^q \dt  =0 
\end{align*}
where $\mathcal{Z}_m = \{mN, mN+1, \ldots, (m+1)N-1\}$. Then $u^j \to 0$ in $L^{\tilde{q}}(\mathcal{R}; \ell^{\tilde q}(\Z))$ as $j \to \infty$ for all $\tilde{q} \in (2, \infty)$.\label{lem:ccn}
\end{enumerate}
   
\end{theorem}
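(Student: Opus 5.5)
The plan is to reproduce the proof of Theorem~\ref{lem:ccot} almost verbatim, replacing the two-parameter family of boxes $\mathcal{R}_{m_2}\times\mathcal{Z}_{m_1}$ by a one-parameter family of slabs. In case (i) the slabs are $\mathcal{R}_m\times\mathcal{Z}$ with $m\in\Z$. In case (ii) they are $\mathcal{R}\times\mathcal{Z}_m$ with $m\in\Z$.

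The preliminary reductions are the same as before. By Hölder's inequality on each slab it suffices to treat $q=2$. The case $q>2$ reduces to $q=2$ because the slabs have finite measure: $|\mathcal{R}_m|=T$ with $\mathcal{Z}$ arbitrary, respectively $\mathcal{Z}_m$ has $N$ elements with $\mathcal{R}$ arbitrary. Since $\sum_{n\in\mathcal{Z}}\int_{\mathcal{R}_m}|u_n|^2\dt\le\|u\|^2_{L^\infty(\mathcal{R}_m;\ell^\infty)}\cdot(\ldots)$ is awkward, I would instead run the reduction in the other direction. The hypothesis for $q$ together with boundedness in $L^2$ gives smallness in $L^{s}$ on slabs for some intermediate $s$, by interpolation between $L^q$ and $L^2$. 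Then I redo the summation argument with the $L^{s}$-sup, which works equally well since the sup factor only needs to tend to zero. Alternatively, I simply assume the $q=2$ hypothesis as the paper does. As before, it also suffices to prove convergence for $\tilde q\in(2,3)$. Larger $\tilde q$ follow by interpolating with the uniform bound in $L^{r}(\R;\ell^{r})$ from Theorem~\ref{theom:emb}, using the continuous embedding $\mathcal{H}\hookrightarrow L^{r}(\R;\ell^{r}(\Z))$ with $\mathcal{R}=\R$, $\mathcal{Z}=\Z$.

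The key step is to fix $r\in(2,4)$, $s=\frac{4(r-1)}{r}$ and $\theta=2/s$, and apply slabwise Hölder:
\[
\|u\|^s_{L^s(\mathcal{R}_m;\ell^s(\mathcal{Z}))}\le\|u\|^{2}_{L^r(\mathcal{R}_m;\ell^r(\mathcal{Z}))}\,\|u\|^{s-2}_{L^2(\mathcal{R}_m;\ell^2(\mathcal{Z}))}.
\]
Summing over $m$ gives the sup of the $L^2$ factors times $\sum_m\|u\|^2_{L^r(\mathcal{R}_m;\ell^r(\mathcal{Z}))}$. The main point to check is a Sobolev estimate on a single slab with a constant independent of $m$:
\[
\|u\|_{L^r(\mathcal{R}_m;\ell^r(\mathcal{Z}))}\le c\,\|u\|_{H^1(\mathcal{R}_m;\ell^2(\mathcal{Z}))}.
\]
This holds because $H^1(\mathcal{R}_m)\hookrightarrow L^r(\mathcal{R}_m)$ has a constant depending only on $T$ (translation invariance), combined with Minkowski's integral inequality and $\ell^2\hookrightarrow\ell^r$:
\[
\|u\|_{L^r(\ell^r)}\le\|u\|_{L^r(\ell^2)}\le\Bigl\|\,\|u_n\|_{L^r(\mathcal{R}_m)}\Bigr\|_{\ell^2(\mathcal{Z})}\quad(\text{as }r\ge2).
\]
Finally, $\sum_m\|u\|^2_{H^1(\mathcal{R}_m;\ell^2(\mathcal{Z}))}\le\|u\|^2$, and this is bounded via \eqref{eq:equinorm}.

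For (ii) the slab is $\mathcal{R}\times\mathcal{Z}_m$, and $\mathcal{R}$ may be unbounded, even all of $\R$. Here I use the one-dimensional embedding $H^1(\mathcal{R})\hookrightarrow L^r(\mathcal{R})$ on a fixed interval, with a constant independent of $m$. The $\ell$-side is harmless since $\ell^2\hookrightarrow\ell^r$ has norm $1$. Summation over $m$ gives at most $\|u\|^2$. We obtain
\[
\|u^j\|^s_{L^s}\le c\sup_j\|u^j\|^2\cdot\sup_m\|u^j\|^{s-2}_{L^2(\text{slab})}\to0.
\]

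The only genuine obstacle is the uniformity of the Sobolev constant over slabs, and mixing the $\ell$ and $L$ norms in the right order via Minkowski. Everything else is identical to Theorem~\ref{lem:ccot}.
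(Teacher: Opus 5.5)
Your proposal is correct and is exactly the adaptation of the proof of Theorem~\ref{lem:ccot} that the paper has in mind when it omits this proof: slabwise H\"older interpolation, a Sobolev bound on each slab that is uniform in $m$ (your Minkowski step handles an infinite $\mathcal{Z}$ properly), and summation over $m$. Drop the claim that the slabs have finite measure and the fallback ``simply assume $q=2$'': when $\mathcal{Z}$ is infinite or $\mathcal{R}$ is unbounded, the H\"older reduction $q\to2$ fails, but your interpolation variant still covers $q>2$ (as would using the $L^q$-supremum directly in the slabwise H\"older step with $r>q$).
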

Since the proof follows the same line of arguments as the proof of Theorem~\ref{lem:ccot} we omit it.

\section{Proof of the main result}\label{sec:proofmain}
The main result of Theorem~\ref{thm:main} follows from Theorem~\ref{thm:groundstate} which is proved at the end of this section.

Before we give the idea of the variational method, let us explain that we have an $\langle \cdot,\cdot\rangle_{\mathcal{H}}$-orthogonal as well as an $\langle \cdot,\cdot\rangle_{L^2_d}$-orthogonal decomposition $\mathcal{H}= \mathcal{H}^+ \oplus \mathcal{H}^-$. To see this, we recall first that $0\not \in \sigma(L)$ due to \ref{as:specgap}. Then we may use the spectral resolution $(P_\lambda)_{\lambda\in\R}$ of the operator $L$ to introduce the orthogonal projections
\begin{align*}
    P^+: L_d^2(\R; \ell^2(\Z)) \to L_d^2(\R; \ell^2(\Z)), \quad u \mapsto u^+ \coloneqq P^+[u]=\int_0^\infty 1 \der P_\lambda[u] ,
\end{align*}
\begin{align*}
    P^-: L_d^2(\R; \ell^2(\Z)) \to L_d^2(\R; \ell^2(\Z)), \quad u \mapsto u^- \coloneqq P^-[u]=\int^0_{-\infty} 1 \der P_\lambda[u] .
\end{align*}

With the help of these projections we decompose $\mathcal{H}= \mathcal{H}^+ \oplus \mathcal{H}^-$, $\mathcal{H}^\pm  \coloneqq P^\pm \mathcal{H}$ and thus we get $\|u\|_{\mathcal{H}}^2=\|u^+\|_{\mathcal{H}}^2+\|u^-\|_{\mathcal{H}}^2$ as well as $b_L(u,u) =\| u^+\|_{\mathcal{H}}^2-\|u^-\|_{\mathcal{H}}^2$. As a result our functional $J$ can be written as
\begin{align*}
    J(u)=\frac{1}{2} \|u^+\|_{\mathcal{H}}^2 - \frac{1}{2} \| u^-\|_{\mathcal{H}}^2 - J_1(u). 
\end{align*}

The idea is to minimize $J$ on the set 
\begin{align*}
    \mathcal{M}\coloneqq \set{u \in \mathcal{H} \setminus \mathcal{H}^- : J'(u)[w]=0 \text{ for all } w \in \R u+\mathcal{H}^-}
\end{align*}
which is known as the Nehari--Pankov manifold. Originally, Nehari~\cite{nehari1,nehari2} established this method in the setting where $\mathcal{H}^-=\{0\}$ whereas Pankov~\cite{pankov} generalized this approach to an indefinite setting. Later, Szulkin and Weth \cite{szulkin_weth} elaborated this method further, which is the main reference for our arguments. A minimizer of $J$ on $\mathcal{M}$ is a called a \emph{ground state}. Under the assumption that $J_1'(u)[u]>0$ for $u\not =0$ (which is fulfilled when applied to our case of Theorem~\ref{thm:main}) the ground state is also a \emph{least-energy solution} of $J'(u)=0$, i.e., $J$ is minimial among all solutions of $J'(u)=0$.

In order to know that a minimizer of $J$ on $\mathcal{M}$ exists, we will apply the theory from \cite{szulkin_weth}. For its applicability several conditions need to hold. The following lemma guarantees assumptions (B1), (B2), (i), (ii) of Theorem 35 in \cite{szulkin_weth}.

\begin{lemma}\label{lem:vorfunkt} The following properties of $J_1$ and $\mathcal M$ hold.
    \begin{enumerate}[(i)]
        \item $J_1$ is weakly lower semicontinuous, 
        \begin{align*}
            J_1(0)=0 \quad \text{ and } \quad \frac{1}{2}J_1'(u)[u]>J_1(u)>0 \text{ for } u \neq 0. 
        \end{align*} \label{lem:vorfunkt:i}
        \item  $\lim_{u\to 0} \frac{J_1'(u)}{\|u\|_{\mathcal{H}}}=0$ and $\lim_{u\to 0} \frac{J_1(u)}{\|u\|^2_{\mathcal{H}}}=0$.\label{lem:vorfunkt:ii}
        \item For a weakly compact set $U\subset \mathcal{H}\setminus\{0\}$ we have $\lim_{s\to\infty} \frac{J_1(s u)}{s^2} = \infty$ uniformly with respect to $u\in U$.\label{lem:vorfunkt:iii}
        \item  For each $w\in\mathcal{H}\setminus \mathcal{H}^-$ let $\mathcal{H}(w)=\R_{\geq 0}w+ \mathcal{H}^-$. Then there exists a unique nontrivial critical point $m(w)$ of $J|_{\mathcal{H}(w)}$. Moreover, $m(w)\in \mathcal{M}$ is the unique global maximizer of $J|_{\mathcal{H}(w)}$ and $J(m(w))>0$.\label{lem:vorfunkt:iv}
    \end{enumerate}
\end{lemma}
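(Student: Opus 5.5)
Most of (i) and all of (ii), (iii) follow by transferring the pointwise properties of $f,F$ from Lemma~\ref{lem:est_on_f} through $B$, using the embeddings of Theorem~\ref{theom:emb} and the boundedness and injectivity of $B$ from Lemma~\ref{lem:Bbounded}. Part (iv) is then obtained from the abstract argument of \cite{szulkin_weth}.

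For (i), $J_1(0)=0$ is immediate. Applying Lemma~\ref{lem:est_on_f}\ref{lem:est_on_f:fF} pointwise with $s=(Bu)_n(t)$ gives $\frac12 f(n,t,(Bu)_n)(Bu)_n> F(n,t,(Bu)_n)>0$ wherever $(Bu)_n(t)\neq0$. For $u\ne0$, injectivity of $B$ (Lemma~\ref{lem:Bbounded}\ref{lem:Bbounded:a}) shows $Bu\neq 0$ on a set of positive measure, and integrating against $d(t)$ yields the strict inequalities $\frac12J_1'(u)[u]>J_1(u)>0$. Weak lower semicontinuity comes from Fatou's lemma. If $u^j\rightharpoonup u$ in $\mathcal H$, then $Bu^j\rightharpoonup Bu$ in $\mathcal H$ because $B$ is bounded linear. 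The compact embeddings of Theorem~\ref{theom:emb} on bounded $\mathcal R\times\mathcal Z$, combined with a diagonal argument, give $(Bu^j)_n(t)\to(Bu)_n(t)$ pointwise almost everywhere along a subsequence. Since $F\ge0$ and $F$ is continuous, Fatou gives $J_1(u)\le\liminf J_1(u^j)$; applying this to a subsequence realizing the $\liminf$ completes (i).

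For (ii), Lemma~\ref{lem:est_on_f}\ref{lem:est_on_f:gen_est}, together with $\|d\|_\infty<\infty$, Lemma~\ref{lem:Bbounded}\ref{lem:Bbounded:b} and Theorem~\ref{theom:emb} with $q=p+1$, gives
\[
|J_1(u)|\le C\bigl(\varepsilon\|u\|^2_{\mathcal H}+C_\varepsilon\|u\|_{\mathcal H}^{p+1}\bigr),
\]
and since $p>1$ this yields $J_1(u)/\|u\|_{\mathcal H}^2\to0$. The derivative is handled the same way: for $\|\varphi\|_{\mathcal H}\le1$, Hölder's inequality gives
\[
|J_1'(u)[\varphi]|\le C\bigl(\varepsilon\|u\|_{\mathcal H}+C_\varepsilon\|u\|_{\mathcal H}^{p}\bigr).
\]

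For (iii), argue by contradiction. Suppose there are $s_j\to\infty$ and $u^j\in U$ with $J_1(s_ju^j)/s_j^2$ bounded. By weak compactness of $U$ we may pass to $u^j\rightharpoonup u\in U$, so $u\ne0$, and as above $Bu\ne0$ and $(Bu^j)_n(t)\to(Bu)_n(t)$ a.e. On the positive-measure set where $(Bu)_n(t)\ne0$ we have $|s_j(Bu^j)_n(t)|\to\infty$. Writing $F(s_jBu^j)/s_j^2=\frac{F(s_jBu^j)}{(s_jBu^j)^2}(Bu^j)^2$, assumption \ref{as:f4} makes this tend to $\infty$ there. Fatou's lemma with $F\ge0$ then gives the contradiction.

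For (iv), I would follow the proof of \cite[Proposition 39]{szulkin_weth} (or the corresponding step of Theorem 35). Its key ingredient is the inequality, derived from \ref{as:f3},
\[
F(s)-F(\tau s)+\tfrac{\tau^2-1}{2}\,f(s)s+\tau\sigma f(s)\le0 ,
\]
which holds pointwise for $\tau\ge0$. With $s=(Bu)_n$ and $\sigma=(Bv)_n$, linearity of $B$ shows that this pointwise inequality yields, for $u\in\mathcal M$, the bound $J(\tau u+v)<J(u)$ whenever $\tau u+v\ne u$ and $v\in\mathcal H^-$. This gives uniqueness of the critical point and that it is the strict global maximizer. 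Existence of the maximizer uses (ii) to get $J>0$ on small multiples of $w^+$, and (iii) together with $-\frac12\|v\|^2$ to get $J\to-\infty$ at infinity in $\mathcal H(w)$. By the weak upper semicontinuity of $J|_{\mathcal H(w)}$ (from (i) and the fact that $\mathcal H(w)$ has a one-dimensional positive part), a maximizer exists, and it is positive and lies in $\mathcal M$.

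The main obstacle is that $B$ intertwines all coordinates, so the pointwise inequality must be verified with the substitution through $B$, including the case $(Bu)_n=0$. It also has to be checked that the sign conditions of \ref{as:f3} survive for negative $s$.
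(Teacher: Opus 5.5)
\textbf{Overall.} Your parts (i) and (ii) follow the paper's route: Fatou plus local compactness for (i), and the growth bounds of Lemma~\ref{lem:est_on_f} with the embeddings for (ii). Part (iv) also follows the paper, deferring to Proposition~39 and Lemma~38 of \cite{szulkin_weth}. Part (iii) is correct but argued differently, and the key inequality in (iv) needs to be corrected.

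\textbf{Part (iii): a different route.} You use the a.e.\ convergence $(Bu^j)_n(t)\to(Bu)_n(t)$ coming from the locally compact embedding of Theorem~\ref{theom:emb}. You then apply Fatou's lemma on the positive-measure set $\{(Bu)_n(t)\neq0\}$, where $F(s_jBu^j)/s_j^2\to\infty$.

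The paper uses only weak convergence. It tests $Bu^j\rightharpoonup Bu$ against the indicator of a finite-measure set $\Omega$ and splits at a level $\delta$. This gives a uniform lower bound on $\sum_n\int_{\Omega_n\cap\{|(Bu^j)_n|\ge\delta\}}|(Bu^j)_n|^2 d(t)\,\mathrm{d}t$. On that set $|s_j(Bu^j)_n|\ge s_j\delta\to\infty$, so (f4) can be applied uniformly in $(n,t)$.

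Your version is shorter and needs (f4) only pointwise in $(n,t)$. Its price is local compactness, which is available here and already used in (i). The paper's version needs the uniformity in (f4) but avoids compactness entirely. Both are valid under the paper's hypotheses.

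\textbf{Part (iv): the inequality must be fixed.} As written, your displayed inequality is false: with $\tau=1$ it says $\sigma f(s)\le 0$ for all $\sigma\in\R$. The last $F$-term must be $F(\tau s+\sigma)$. You also need the strict version from \cite[Lemma 38]{szulkin_weth}, which is the paper's form with $\tau=1+\xi$, $\sigma=\eta$:
\[
f(s)\Bigl[\tfrac{\tau^2-1}{2}\,s+\tau\sigma\Bigr]+F(s)-F(\tau s+\sigma)<0\qquad\text{for } \tau\ge0,\ (\tau-1)s+\sigma\neq0 .
\]

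\textbf{Why strictness matters.} For $u\in\mathcal M$ and $v\in\mathcal H^-$, using $J'(u)[\tfrac{\tau^2-1}{2}u+\tau v]=0$ one gets
\[
J(\tau u+v)-J(u)=-\tfrac12\|v\|_{\mathcal H}^2+\int_\R\sum_n\bigl[\dots\bigr]\,d(t)\,\mathrm{d}t,
\]
where the bracket is the left-hand side above evaluated at $s=(Bu)_n(t)$, $\sigma=(Bv)_n(t)$.
\begin{itemize}
\item If $v\neq0$, the norm term already gives $<0$.
\item If $v=0$ and $\tau\neq1$, you need the strict pointwise inequality on the set $\{(Bu)_n(t)\neq0\}$. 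That set has positive measure precisely because $B$ is injective.
\end{itemize}
This is where $B$ genuinely enters, and your proposal does not mention it. The points you flag as the main obstacle are not real issues. The inequality is a statement about real numbers, valid for both signs of $s$ under (f1)--(f3). For $s=0$ it reduces to $F(\sigma)>0$ for $\sigma\neq0$. Composing with $B$ is harmless because $B(\tau u+v)=\tau Bu+Bv$.
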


\begin{proof}
(i) If $u^j \rightharpoonup u \in \mathcal{H}$ then the boundedness of $B:\mathcal{H}\to\mathcal{H}$ from Lemma~\ref{lem:Bbounded} yields $Bu^j \rightharpoonup Bu \in \mathcal{H}$ as $j\to \infty$. By the locally compact embedding, cf.~Theorem~\ref{theom:emb}, we have (up to a subsequence) that $Bu^j\to Bu$ pointwise a.e. on $\Z \times \R$ as $j \to \infty$.  Since the mapping $s \mapsto F(n,t,s)d(t)$ is continuous and non-negative for all $(n,t) \in \Z \times \R$ the weak lower-semicontinuity of $J_1$ follows from Fatou's lemma. The statement $\frac{1}{2} J_1'(u)[u]> J_1(u)>0$ for all $u\in \mathcal{H}\setminus\{0\}$ then follows from Lemma~\ref{lem:est_on_f} (i) and the injectivity of $B$.

(ii) Let $\varepsilon>0$. Recall from Lemma~\ref{lem:est_on_f} (iv) that $|\tilde f(u)|\leq \varepsilon |u|+ C_\varepsilon |u|^p \in L^2(\R;\ell^2(\Z))$ for $u\in\mathcal{H}$. Since $\nabla_{L^2_d} J_1(u) = B^\ast\tilde f(Bu)$ we have that
\begin{align*}
     \norm{J_1'(u)}_{\mathrm{op}, \mathcal{H}\to \R}  \leq c_1 \norm{\nabla_{L^2_d} J_1(u)}_{L^2_d} &\leq c_1\sqrt{\norm{d}_{L^\infty}} \norm{B}_{\mathrm{op},L^2}\norm{\tilde{f}(Bu)}_{L^2} \\
     &\leq c_1 c_2\sqrt{\norm{d}_{L^\infty}} \norm{B}_{\mathrm{op},L^2} \sqrt{2} ( \varepsilon \norm{B}_{\mathrm{op},\mathcal{H}} \norm{u}_{\mathcal{H}}+ C_\varepsilon \norm{B}_{\mathrm{op},\mathcal{H}}^p \norm{u}_{\mathcal{H}}^p )
\end{align*}
where $c_1, c_2>0$ estimate embedding constants from Theorem~\ref{theom:emb}. Because this holds true for every $\varepsilon>0$, this proves the first claim. Similarly one can prove the second claim.

(iii) The statement means that $\lim_{s\to\infty} \inf_{u\in U} \frac{J_1(su)}{s^2}=\infty$. If this were not the case then there exists a sequence $s_j\to \infty$ and $(u^j)_{j\in\N}\subset U$ such that $\sup_j \frac{J_1(s_ju^j)}{s_j^2}<\infty$. Up to a subsequence we may assume $u^j\rightharpoonup u\not= 0$ and $Bu^j\rightharpoonup Bu\not= 0$ in $\mathcal{H}$ as $j\to \infty$. Let us select a measurable set $\Omega\subset \R\times\Z$ with fibers $\Omega_n \coloneqq \{t\in \R: (t,n)\in \Omega\}$ and $0<|\Omega| \coloneqq \sum_{n\in \Z} |\Omega_n|<\infty$ (using $|\cdot|$ for the 1-dimensional Lebesgue measure) such that 
$$
0<a \coloneqq \left| \sum_{n\in\Z}\int_{\Omega_n} (Bu)_n(t) d(t)\der t\right|
$$
and (by weak convergence) for $\delta>0$ and sufficiently large $j$
\begin{align*}
\frac{a}{2} &\leq  \left| \sum_{n\in \Z}\int_{\Omega_n} (Bu^j)_n(t) d(t)\der t\right| \\
& \leq \sum_{n\in \Z} \int_{\Omega_n \cap \{|(Bu^j)_n(t)|< \delta\}} |(Bu^j)_n(t)| d(t)\der t + \sum_{n\in \Z} \int_{\Omega_n \cap \{|(Bu^j)_n(t)|\geq\delta\}} |(Bu^j)_n(t)| d(t)\der t.
\end{align*}
For $\delta>0$ small enough we get 
$$
\frac{a}{4} \leq   |\Omega|^\frac{1}{2}  \|d\|_\infty^{\frac12} \left(\sum_{n\in \Z} \int_{\Omega_n \cap \{|(Bu^j)_n(t)|\geq \delta\}} |(Bu^j)_n(t)|^2 d(t)\der t\right)^\frac{1}{2}.
$$

On the set $\Omega_n \cap \{|(Bu^j)_n(t)|\geq \delta\}$ we have for every $K>0$ and large enough $j$ by assumption~\ref{as:f4}
$$
\frac{F(n,t, s_j (Bu^j)_n(t))}{s_j^2} \geq K |(Bu^j)_n(t)|^2 
$$
so that 
\begin{align*}
\frac{J_1(s_j u^j)}{s_j^2} &\geq \sum_{n\in\Z} \int_{\Omega_n \cap \{|(Bu^j)_n(t)|\geq \delta\}} \frac{F(n,t, s_j(Bu^j)_n(t))}{s_j^2} d(t)\der t \\
& \geq 
K \int_{\Omega_n \cap \{|(Bu^j)_n(t)|\geq \delta\}} |(Bu^j)_n(t)|^2 d(t) \der t \geq \frac{a^2K}{16|\Omega| \|d\|_\infty}
\end{align*}
for large enough $j$. Since $K>0$ was arbitrary this contradicts the assumption $\sup_j \frac{J_1(s_ju^j)}{s_j^2}<\infty$.

(iv) The proof is based on the proof of Proposition 39 in \cite{szulkin_weth} and consists of three steps which will be proved below. Recall that $w\in \mathcal{H}\setminus\mathcal{H}^-$.\\
\textit{Step 1:} $u\in \mathcal{M} \Leftrightarrow u$ is a critical point of $J|_{\mathcal{H}(u)}$. If so, then $u$ is the unique global maximizer of $J|_{\mathcal{H}(u)}$.\\
\textit{Step 2:} $\mathcal{H}(w)\cap \mathcal{M}\not = \emptyset$ \\
\textit{Step 3:} The restricted functional $J|_{\mathcal{H}(w)}$ has a unique global maximizer $m(w)$ with $J(m(w))>0$ and no other critical point.

The proof of the equivalence in \textit{Step 1} consists in nothing else then the inspection of the definition of $\mathcal{M}$. To see that $u$ is the unique global maximizer of $J|_{\mathcal{H}(u)}$ we use the pointwise inequality from \cite[Lemma 38]{szulkin_weth}
$$
f(n,t,s)[\xi(\frac{\xi}{2}+1)s+(1+\xi)\eta]+F(n,t,s)-F(n,t,(1+\xi)s+\eta)<0
$$ 
for $s,\eta,\xi\in \R, \xi s+\eta\not =0$ and $\xi\geq-1$. For $v\in \mathcal{H}^-$ and $z:= \xi u+v$, $\xi\geq -1$ it is shown in \cite[Lemma 38]{szulkin_weth} that
this implies $J(u+z)<J(u)$ provided $z\not =0$.
Since the elements $u+z$ cover all of $\mathcal{H}(u)$ this shows that $u$ is the unique global maximizer of $J|_{\mathcal{H}(u)}$. The proof of \textit{Step 2} follows the same lines as the proof of Proposition 39 (i) in \cite{szulkin_weth}.  To see \textit{Step 3}, take $u\in \mathcal{M}\cap \mathcal{H}(w)$, cf. \textit{Step 2}. By \textit{Step 1}, $u$ is the unique global maximizer of $J|_{\mathcal{H}(u)}= J|_{\mathcal{H}(w)}$. We write $u\eqqcolon m(w)$ and since $m(w)\in \mathcal{M}$ we get 
$$
J(m(w)) = J(m(w)) - \frac{1}{2}\underbrace{J'(m(w))m(w)}_{=0} = \frac{1}{2}J_1'(m(w))m(w)- J_1(m(w))>0
$$
by (i). Finally, if $z$ is any critical point of $J|_{\mathcal{H}(w)}$ by \textit{Step 1} necessarily $z=m(w)$.
\end{proof}

In the next lemma we show that also assumption (iii) in Theorem 35 in \cite{szulkin_weth} holds true when we are in case \ref{as:f:locnt} of Theorem~\ref{thm:main}. As a consequence, existence of a minimizer of $J$ on $\mathcal{M}$ immediately follows.

\begin{lemma}\label{lem:complcont}
    If $f$ satisfies assumption~\ref{as:f:locnt}, then the mapping $J_1':\mathcal{H} \to \mathcal{H}', u \mapsto J_1'(u)$ is completely continuous.
\end{lemma}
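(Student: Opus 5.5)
We have to show: if $u^j \rightharpoonup u$ in $\mathcal{H}$, then $J_1'(u^j)\to J_1'(u)$ in $\mathcal{H}'$. Since $J_1'(v)[\varphi]=\int_\R\sum_n f(n,t,(Bv)_n)(B\varphi)_n d(t)\der t$ and $B$ is bounded on $\mathcal{H}$ (Lemma~\ref{lem:Bbounded}), we set $v^j:=Bu^j\rightharpoonup v:=Bu$. The estimate from the proof of Lemma~\ref{lem:vorfunkt}(ii) then gives
$$
\norm{J_1'(u^j)-J_1'(u)}_{\mathrm{op},\mathcal{H}\to\R}\le C\norm{\tilde f(v^j)-\tilde f(v)}_{L^2(\R;\ell^2(\Z))}.
$$
So the proof reduces to $\tilde f(v^j)\to\tilde f(v)$ in $L^2(\R;\ell^2(\Z))$ for weakly convergent bounded sequences. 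Argue by subsequences: it suffices to show that every subsequence has a further subsequence along which this holds.

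Split $\Z\times\R$ into a bounded box $Q_K=\{|n|\le K\}\times[-K,K]$ and its complement. On $Q_K$, Lemma~\ref{lem:est_on_f}(iv) says $\tilde f:\mathcal{H}\to L^2([-K,K];\ell^2(\{|n|\le K\}))$ is compact and continuous. Together with the compact embedding of Theorem~\ref{theom:emb}, which gives $v^j\to v$ strongly in $L^2\cap L^{2(p+\rho)}$ on $Q_K$, this yields $\tilde f(v^j)\to\tilde f(v)$ in $L^2(Q_K)$. Equivalently, dominated convergence after extracting an a.e.\ convergent subsequence with an $L^2$ majorant gives the same conclusion.

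On the complement of $Q_K$, use Lemma~\ref{lem:est_on_f}(iii)(a) with $r=2$:
$$
|f(n,t,s)|^2\le\varepsilon|s|^2+h_\varepsilon(n,t)|s|^{2(p+\rho)},
$$
where $h_\varepsilon$ is bounded and $h_\varepsilon(n,t)\to0$ as $|(n,t)|\to\infty$. Hence
$$
\int_{|t|>K}\sum_{|n|>K \text{ or } \dots}|\tilde f(v^j)|^2\le \varepsilon\sup_j\norm{v^j}^2_{L^2}+\sup_{Q_K^c}h_\varepsilon\cdot\sup_j\norm{v^j}^{2(p+\rho)}_{L^{2(p+\rho)}(\R;\ell^{2(p+\rho)}(\Z))}.
$$
Both norms are uniformly bounded by Theorem~\ref{theom:emb}. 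The same bound holds for $v$. Therefore, given $\eta>0$, we first choose $\varepsilon$ small and then $K$ large so that the tails of both $\tilde f(v^j)$ and $\tilde f(v)$ are below $\eta$ uniformly in $j$. Combining this with the convergence on $Q_K$ gives $\limsup_j\norm{\tilde f(v^j)-\tilde f(v)}_{L^2}^2\le 4\eta$, and $\eta$ is arbitrary.

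The main obstacle is this tail estimate: making the uniform-in-$j$ smallness outside a bounded box rigorous. The key ingredient is the decay $h_\varepsilon\to0$ from assumption~(A), combined with boundedness of $v^j$ in $L^{2(p+\rho)}(\R;\ell^{2(p+\rho)}(\Z))$. Everything else is local compactness already provided by Theorem~\ref{theom:emb} and Lemma~\ref{lem:est_on_f}(iv).
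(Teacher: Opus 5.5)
Your proposal is correct and follows essentially the paper's route. You reduce, via boundedness of $B$ and $d$, to $\tilde f(Bu^j)\to\tilde f(Bu)$ in $L^2(\R;\ell^2(\Z))$, handle a bounded box by local compactness (Theorem~\ref{theom:emb}, Lemma~\ref{lem:est_on_f}(iv)), and make the complement uniformly small in $j$ using the decaying weight $h_\varepsilon$ from Lemma~\ref{lem:est_on_f}(iii)(a), choosing $\varepsilon$ first and then the box. On the box, your argument (strong convergence from the compact embedding, then continuity of the Nemytskii operator) is, if anything, slightly more careful than the paper's direct appeal to compactness of $\tilde f$.
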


\begin{proof} Let $(u^j)_{j \in \N}$ be a sequence in $\mathcal{H}$ such that $u^j \rightharpoonup u \in \mathcal{H}$ as $j \to \infty$. 
Using $\nabla_{L^2_d} J_1'(u)=B^\ast \tilde f(Bu)$ and a suitable embedding constant $c$ from Theorem~\ref{theom:emb} we have
\begin{align*}
    \norm{J_1'(u^j)-J_1'(u)}_{\mathrm{op}, \mathcal{H}\to \R} &\leq   c\norm{\nabla_{L^2_d} J_1(u^j)-\nabla_{L^2_d} J_1(u)}_{L^2_d(\R;\ell^2(\Z))} \\
    & \leq c\norm{B}_{\mathrm{op},\ell^2(\Z)} \norm{ \tilde f(Bu^j)-\tilde f(Bu)}_{L^2_d(\R;\ell^2(\Z))} 
\end{align*}    
and we will show that $\norm{ \tilde f(Bu^j)-\tilde f(Bu)}_{L^2_d(\R;\ell^2(\Z))}\to 0$ as $j\to\infty$. For a bounded interval $\mathcal{R}\subset\R$ and a bounded set $\mathcal{Z}\subset\Z$ we estimate
\begin{align*}
    \norm{\tilde f(Bu^j)-\tilde f(Bu)}_{L^2(\R; \ell^2(\Z))}^2\leq  & \norm{\tilde f(Bu^j)-\tilde f(Bu)}_{L^2(\mathcal{R}; \ell^2(\mathcal{Z}))}^2 \\
    & +\norm{\tilde f(Bu^j)-\tilde f(Bu)}_{L^2(\mathcal{R}^c; \ell^2(\mathcal{\Z}))}^2
    & + \norm{\tilde f(Bu^j)-\tilde f(Bu)}_{L^2(\R; \ell^2(\mathcal{Z}^c))}^2.
\end{align*}
For $\rho >0$ and $\varepsilon>0$ there exists by Lemma~\ref{lem:est_on_f} a positive, bounded function $h_\varepsilon(n,t)$ such that $\lim_{|(n,t)| \to \infty}h_\varepsilon
(n,t)=0$ and
\begin{align*}
    |f(n,t,s)|^2 \leq \varepsilon |s|^2 + h_\varepsilon(n,t) |s|^{2(p+\rho)} \text{ for all } (n,t,s) \in \Z \times \R \times \R .
\end{align*}
By choosing $\mathcal R$ and $\mathcal Z$ to be sufficiently large we can achieve that $|f(n,t,s)|^2 \leq \varepsilon (|s|^2+ |s|^{2(p+\rho)})$ for all $(n,t) \in (\R\times\mathcal{Z}^c) \cup (\mathcal{R}^c\times\Z)$ and all $s\in\R$.
A direct application of this estimate leads to
\begin{align}
     \norm{\tilde f(Bu^j)-\tilde f(Bu)}_{L^2(\mathcal{R}^c;\ell^2(\Z))}^2 \nonumber
     \leq & \varepsilon ( \norm{Bu^j}_{L^2(\mathcal{R}^c;\ell^2(\Z))}^2+ \norm{Bu}_{L^2((\mathcal{R}^c;\ell^2(\Z))}^2) \nonumber\\
     &+\varepsilon(\norm{Bu^j}_{L^{2(p+\rho)}(\mathcal{R}^c;\ell^{2(p+\rho)}(\Z))}^{2(p+\rho)})
     +\norm{Bu}_{L^{2(p+\rho)}(\mathcal{R}^c;\ell^{2(p+\rho)}(\Z))}^{2(p+\rho)} ) \label{eq:est_diffB}\\
     \leq & \varepsilon \tilde c (\norm{Bu^j}_\mathcal{H}^2+ \norm{Bu}_\mathcal{H}^2+ \norm{Bu^j}_\mathcal{H}^{2(p+\rho)}+ \norm{Bu}_\mathcal{H}^{2(p+\rho)}) \nonumber \\
      \leq & \varepsilon C \nonumber
\end{align}
where $\tilde c$ estimates the embedding constants from Theorem~\ref{theom:emb} and $C$ additionally takes into account the boundedness of the sequence $(Bu^j)_{j \in \N}$ in $\mathcal{H}$. A similar estimate holds for $\norm{\tilde f(Bu^j)-\tilde f(Bu)}_{L^2(\R; \ell^2(\mathcal{Z}^c))}^2$. Lastly, it remains to analyze the difference $\tilde f(Bu^j)-\tilde f(Bu)$ on the bounded domain $\mathcal{R}\times\mathcal{Z}$.
Since $u^j \rightharpoonup u$ in $\mathcal{H}$ and $Bu^j \rightharpoonup Bu$ in $\mathcal{H}$ we use the compactness of the Nemytskii operator from Lemma~\ref{lem:est_on_f}(iv) to obtain $\norm{\tilde f(Bu^j)-\tilde f(Bu)}_{L^2(\mathcal{R};\ell^2(\mathcal{Z}))}^2\to 0$ as $j\to \infty$. Together with \eqref{eq:est_diffB} this finishes the proof.
\end{proof}

The proof of Lemma~\ref{lem:complcont} is based on the interaction of the localization property of $f$ with respect to space and time and local compactness of the embedding from Theorem~\ref{theom:emb}.
In cases \ref{as:f:pernt}, \ref{as:f:locn:pert} and \ref{as:f:loct:pern} of Theorem~\ref{thm:main} the function $f$ cannot directly recover the lack of global compactness of the embedding. Nevertheless, since we have imposed an additional structure on \eqref{eq:main}, namely periodicity, we can bypass this issue.

\begin{lemma}\label{lem:PSbounded}
 Let $f$, $M$, $B$ and $d$ be as in case \ref{as:f:pernt}, \ref{as:f:locn:pert} or \ref{as:f:loct:pern}  of Theorem~\ref{thm:main}. Then any Palais--Smale sequence $(u^j)_{j \in \N}$ of $J|_{\mathcal{M}}$ is bounded.
\end{lemma}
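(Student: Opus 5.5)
We follow the standard Szulkin--Weth argument (cf. the proof of Proposition 39 / Lemma 40 in \cite{szulkin_weth}) and argue by contradiction. Since $u^j\in\mathcal{M}$, Lemma~\ref{lem:vorfunkt}(i) gives
\[
J(u^j)=\tfrac12 J_1'(u^j)[u^j]-J_1(u^j)>0 .
\]
A Palais--Smale sequence has $J(u^j)\le c$ for some $c>0$. Suppose $\|u^j\|_{\mathcal{H}}\to\infty$ along a subsequence and set $v^j\coloneqq u^j/\|u^j\|_{\mathcal{H}}$. Then $v^j$ is bounded, and we split into the vanishing and non-vanishing alternatives of the concentration compactness results, Theorem~\ref{lem:ccot} and Theorem~\ref{thm:cc}.

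\emph{Non-vanishing case.} First observe that $\|(v^j)^+\|_{\mathcal{H}}$ cannot tend to $0$. Indeed, $0<J(u^j)\|u^j\|_{\mathcal{H}}^{-2}=\frac12\|(v^j)^+\|_{\mathcal{H}}^2-\frac12\|(v^j)^-\|_{\mathcal{H}}^2-J_1(u^j)/\|u^j\|_{\mathcal{H}}^2$ and $J_1\ge0$, so $\|(v^j)^+\|^2_{\mathcal{H}}\ge\|(v^j)^-\|^2_{\mathcal{H}}$. Together with $\|(v^j)^+\|^2_{\mathcal{H}}+\|(v^j)^-\|^2_{\mathcal{H}}=1$ this gives $\|(v^j)^+\|_{\mathcal{H}}^2\ge\frac12$.

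Suppose now that $Bv^j$ does not vanish in the sense of the relevant concentration compactness theorem, i.e.\ there are blocks (over $\mathcal{Z}_{m_1}\times\mathcal{R}_{m_2}$ in case (B), over $\mathcal{R}_m$ with fixed bounded $\mathcal{Z}$ in case (C1), over $\mathcal{Z}_m$ with fixed bounded $\mathcal{R}$ in case (C2)) carrying $L^2$-mass of $Bv^j$ at least $\delta>0$. By periodicity of $f$ and $d$, and since $M,B$ commute with $S_N$ (hence so do $\sqrt M$ by Lemma~\ref{lem:operatorcommute}, $L$, $|L|$ and $P^\pm$), the functional $J$, the norm $\|\cdot\|_{\mathcal{H}}$ and $\mathcal{M}$ are invariant under shifts by multiples of $T$ in $t$ and of $N$ in $n$. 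We therefore translate the blocks to a fixed bounded box. In the cases where the localized direction cannot be shifted, the localization of $f$ (via $h_\varepsilon$ of Lemma~\ref{lem:est_on_f}(iii)) has to be used to show that the non-vanishing mass stays in a bounded region there.

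After shifting, $\tilde v^j\rightharpoonup v$ with $Bv\ne0$ by the local compactness in Theorem~\ref{theom:emb}, hence $v\ne0$. Then, since $u^j\in\mathcal{M}$ maximizes $J$ on $\mathcal{H}(u^j)$ by Lemma~\ref{lem:vorfunkt}(iv), we get for fixed $s>0$
\[
c\ge J(u^j)\ge J(s\tilde v^j)=\tfrac{s^2}{2}\bigl(\|(\tilde v^j)^+\|_{\mathcal{H}}^2-\|(\tilde v^j)^-\|_{\mathcal{H}}^2\bigr)-J_1(s\tilde v^j).
\]
Dividing by $s^2$ and using Fatou together with \ref{as:f4} (as in the proof of Lemma~\ref{lem:vorfunkt}(iii)), we have $J_1(s\tilde v^j)/s^2\to\infty$, whereas the quadratic part is bounded by $\frac12$. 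Hence for large $j$ and large $s$ the right-hand side is very negative. This alone is no contradiction. The standard route is instead to divide the display $0\le J(u^j)/\|u^j\|_{\mathcal{H}}^2$ directly:
\[
0\le \tfrac12-\liminf_{j} \int\sum_n \frac{F(n,t,(Bu^j)_n)}{\|u^j\|_{\mathcal{H}}^2}\,d\,\der t ,
\]
and on the set where $Bv\ne0$ one has $|(Bu^j)_n|\to\infty$, so Fatou with \ref{as:f4} forces the integral to $\infty$. This is the contradiction.

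\emph{Vanishing case.} Here Theorem~\ref{lem:ccot}/\ref{thm:cc} (in the mixed cases combined with the smallness of $h_\varepsilon$ outside a bounded set) gives $Bv^j\to0$ in $L^{\tilde q}(\R;\ell^{\tilde q}(\Z))$ for all $\tilde q>2$, and by Lemma~\ref{lem:est_on_f}(ii) we get $J_1(s v^j)\to0$ for each fixed $s$. Using the maximality again with $w=s(v^j)^+$, which lies in $\mathcal{H}(u^j)$, we obtain
\[
c\ge J(u^j)\ge J\bigl(s(v^j)^+\bigr)=\tfrac{s^2}{2}\|(v^j)^+\|_{\mathcal{H}}^2-J_1\bigl(s(v^j)^+\bigr)\ge \tfrac{s^2}{4}-o(1).
\]
To apply the vanishing to $(v^j)^+$, we note that $(v^j)^+$ also vanishes, since $\|(v^j)^+\|_{L^q(\text{block})}$ can be controlled. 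Alternatively we argue directly with $Bs(v^j)^+$, checking vanishing of $(v^j)^+$ via Theorem~\ref{lem:ccot}; we expect this step to need care, possibly by running the concentration compactness alternative on $(v^j)^+$ instead of $v^j$ from the start. Choosing $s^2>4c$ then gives a contradiction.

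The main obstacle is handling the mixed cases (C1)/(C2), where translations are only available in one variable, and making the vanishing/non-vanishing dichotomy compatible with the localization of $f$ in the other variable. Here we would decompose $J_1$ into a part over a large bounded set in the localized variable, where the restricted concentration compactness statement of Theorem~\ref{thm:cc} applies, and a remainder made $\varepsilon$-small using $h_\varepsilon$.
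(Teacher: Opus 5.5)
Your skeleton matches the paper: the normalization $v^j=u^j/\|u^j\|_{\mathcal{H}}$, the bound $\|(v^j)^+\|_{\mathcal{H}}^2\ge\frac12$, shifting plus Fatou with \ref{as:f4} in the non-vanishing case, and the $h_\varepsilon$-splitting in (C1)/(C2). As written, however, the vanishing case has a genuine gap. You run the concentration-compactness alternative on $Bv^j$. The contradiction you then derive, $J(u^j)\ge J(s(v^j)^+)\ge\frac{s^2}{4}-o(1)$, needs $J_1(s(v^j)^+)\to0$, i.e.\ vanishing of $B(v^j)^+$. Your justification that the block norms of $(v^j)^+$ ``can be controlled'' fails. $P^+$ is a spectral projection of $L=\frac{1}{d(t)}\partial_t^2+M$, which is nonlocal in both $t$ and $n$, so there is no blockwise estimate of $P^+v$ in terms of $v$. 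Nor can you use $v^j$ itself in that step: $J(sv^j)=\frac{s^2}{2}\bigl(\|(v^j)^+\|_{\mathcal{H}}^2-\|(v^j)^-\|_{\mathcal{H}}^2\bigr)-J_1(sv^j)$, and this quadratic part has no positive lower bound. So the essential point, that the plus-part cannot vanish, is left open.

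The remedy you mention at the end is the right one, and it is exactly the paper's proof, which needs no dichotomy on $v^j$ at all.
\begin{itemize}
\item Maximality on $\mathcal{H}(u^j)$ gives $J(u^j)\ge J(\tau(v^j)^+)\ge\frac{\tau^2}{8}-C\tau^{p+1}\|(v^j)^+\|^{p+1}_{L^{p+1}(\R;\ell^{p+1}(\Z))}$ for every $\tau>0$. In (C1)/(C2) the last term is replaced by the $h_\varepsilon$-weighted one, which yields non-vanishing on a fixed bounded $\mathcal{Z}_0$ resp.\ bounded interval. Boundedness of $J(u^j)$ therefore excludes vanishing of $(v^j)^+$.
\item Theorem~\ref{lem:ccot} resp.\ \ref{thm:cc} then gives blocks carrying $L^2$-mass $\ge\delta$ of $(v^j)^+$.
\item Since $P^\pm$ commute with the admissible shifts, the shifted sequence satisfies $w^j\rightharpoonup w$ with $w^+\neq0$. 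Hence $Bw\neq0$, and your Fatou/\ref{as:f4} step (Lemma~\ref{lem:vorfunkt}(iii)) finishes the proof.
\end{itemize}

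If you prefer to keep your dichotomy, you can also close the gap by a translation argument. Suppose $Bv^j$ vanishes but $B(v^j)^+$ does not. Shift so that $B(\tilde v^j)^+$ has mass $\ge\delta$ on a fixed box. The shifted $B\tilde v^j$ still vanish, so $\tilde v^j\rightharpoonup0$ by injectivity of $B$, and hence $(\tilde v^j)^+\rightharpoonup0$. The local compactness of Theorem~\ref{theom:emb} then contradicts the mass bound.
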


\begin{proof}
    The following proof is adapted from \cite[Lemma~5.5]{hr}. We assume  for contradiction that $(u^j)_{j \in \N} \subset \mathcal{M}$ is an unbounded Palais--Smale sequence for $J$ so that for a subsequence (again denoted by $u^j$) we have $\norm{u^j}_{\mathcal{H}}\to \infty$ as $j \to \infty$. The sequence $v^j\coloneqq\frac{u^j}{\norm{u^j}_{\mathcal{H}}}$ is bounded and hence there exists a subsequence (again denoted by $v^j$) and $v \in \mathcal{H}$ such that $v^j \rightharpoonup v$ as $j \to \infty$.
 Since $J(u^j)=J(\norm{u^j}_{\mathcal{H}}v^j )$ and  $u^j$ is in $\mathcal{M}$ we know that $J(u^j)\geq 0$. This leads to the estimate
\begin{align}\label{eq:PS_estimate}
    0 \leq \frac{J(u^j)}{\norm{u^j}_{\mathcal{H}}^2} = \frac{1}{2} \norm{v^{j,+}}_{\mathcal{H}}^2-\frac{1}{2} \norm{v^{j,-}}_{\mathcal{H}}^2 - \frac{J_1(\norm{u^j}_{\mathcal{H}}v^j)}{\norm{u^j}_{\mathcal{H}}^2} .
\end{align}
 Moreover, from $J_1 \geq 0$ and \eqref{eq:PS_estimate} we infer that $\norm{v^{j,-}}_{\mathcal{H}}^2 \leq \norm{v^{j,+}}_{\mathcal{H}}^2$. In particular, since $\norm{v^{j,+}}_{\mathcal{H}}^2+\norm{v^{j,-}}_{\mathcal{H}}^2=1$ we have
\begin{align}\label{eq:normvj+}
\frac{1}{2}\leq \norm{v^{j,+}}_{\mathcal{H}}^2\leq 1 .    
\end{align}

For the rest of the proof we distinguish between the cases \ref{as:f:pernt}, \ref{as:f:locn:pert} and \ref{as:f:loct:pern}. 

\underline{Case \ref{as:f:pernt}}:
 First we show that $v^{j,+} \not \to 0$ in $L^{p+1}(\R;\ell^{p+1}(\Z))$ as $j \to \infty$. Since for every $\varepsilon >0$ there exists $C_\varepsilon >0$ such that $|F(n,t,s)| \leq \varepsilon s^2+C_\varepsilon |s|^{p+1}$ (see Lemma~\ref{lem:est_on_f}~\ref{lem:est_on_f:gen_est}) 
 we estimate $0 \leq J_1(u) \leq c( \varepsilon \norm{u}_{\mathcal{H}}^2+ C_\varepsilon  \norm{u}_{L^{p+1}(\R; \ell^{p+1}(\Z))}^{p+1} )$ for every $u \in \mathcal{H}$ with $c>0$ taking into account an embedding constant from Theorem~\ref{theom:emb} and operator norms of $B$ from Lemma~\ref{lem:Bbounded}.
From this estimate combined with Lemma~\ref{lem:vorfunkt}~\ref{lem:vorfunkt:iv} and \eqref{eq:normvj+} we obtain for any $\tau>0$ that 
\begin{align}\label{eq:contra}
    J(u^j)\geq J(\tau v^{j,+})&=\frac{\tau^2}{2} \norm{v^{j,+}}_{\mathcal{H}}^2-J_1(\tau v^{j,+}) \\
    &\geq \frac{\tau^2}{4}- \varepsilon  c  \tau^2 \norm{v^{j,+}}_{\mathcal{H}}^2- C_\varepsilon  c|\tau|^{p+1}\norm{v^{j,+}}_{L^{p+1}(\R; \ell^{p+1}(\Z))}^{p+1}\nonumber \\
    &\geq \frac{\tau^2}{8}  - C_\varepsilon c|\tau|^{p+1}\norm{v^{j,+}}_{L^{p+1}(\R; \ell^{p+1}(\Z)))}^{p+1}    \nonumber
\end{align}
for all $j \in \N$  by an appropriate choice of $\varepsilon$.  Since $(u^j)_{j\in \N}$ being a Palais--Smale sequence implies that the left hand side of \eqref{eq:contra} is bounded but $\tau$ can be arbitrary, we conclude that $v^{j,+} \not\to 0$ in $L^{p+1}(\R;\ell^{p+1}(\Z))$ for $j \to \infty$ as claimed.

Next, we construct a new sequence $(w^j)_{j \in \N}$ based on $(v^j)_{j \in \N}$. As we have just shown, $v^{j,+} \not \to 0$ in $L^{p+1}(\R;\ell^{p+1}(\Z))$ so that by Theorem~\ref{lem:ccot} there exists $\delta>0$, a sequence of translations $(m_1^j,m_2^j)_{j \in \N} \in \Z^2$ and a subsequence of $v^{j,+}$ (again denoted by $v^{j,+}$) with the property
\begin{align*}
    \sum_{n=m_1^j N}^{(m_1^j+1)N-1} \int_{[m_2^j T, (m_2^j+1)T]} |v_n^{j,+}(t)|^{2} \dt \geq \delta >0
\end{align*}
for all $j \in \N$. With this at hand we introduce the sequence $w^j_n(t) \coloneqq v^j_{n+m_1^j N}(t+m_2^j T)$. 
The periodicity assumption \ref{as:f:pernt} and Lemma~\ref{lem:operatorcommute}(ii) imply that $\norm{w^j}_{\mathcal{H}}=\norm{v^j}_{\mathcal{H}}$ and
\begin{align}\label{eq:shifted}
    \sum_{n=0}^{N-1} \int_{[0,T]}|w_n^{j,+}(t)|^2 \dt = \sum_{n=m_1^jN}^{(m_1^j+1)N-1} \int_{[m_2^jT, (m_2^j+1)T]} |v_n^{j,+}(t)|^2 \dt \geq \delta >0
\end{align}
holds for all $j \in \N$. Further, there exists $w \in \mathcal{H}$ such that (up to a subsequence) $w^j \rightharpoonup w $ in $\mathcal{H}$ as $j\to \infty$. In particular $w^{j,+} \rightharpoonup w^+$ in $\mathcal{H}$ and $w^{j,+} \to w^+$ in $L^2([0,T]; \ell^2(\{0,\ldots, N-1\})$ as $j \to \infty$ by the locally compact embedding result from Theorem~\ref{theom:emb}. From \eqref{eq:shifted} we get $w^+\not =0$ thus $w\not =0$.
On the other hand, by our assumption \ref{as:f:pernt} and Lemma~\ref{lem:operatorcommute} it follows that $J(u^j)=J(\norm{u^j}_{\mathcal{H}}v^j)=J(\norm{u^j}_{\mathcal{H}}w^j)$ and together with \eqref{eq:PS_estimate} this leads to 
\begin{align} \label{eq:final_contra}
      0 \leq \frac{J(u^j)}{\norm{u^j}_{\mathcal{H}}^2} = \frac{1}{2} \norm{w^{j,+}}_{\mathcal{H}}^2-\frac{1}{2} \norm{w^{j,-}}_{\mathcal{H}}^2 - \frac{J_1(\norm{u^j}_{\mathcal{H}}w^j)}{\norm{u^j}_{\mathcal{H}}^2}.
\end{align}
If we take the limit $j \to \infty$ and apply Lemma~\ref{lem:vorfunkt}~\ref{lem:vorfunkt:iii} to the weakly compact set $U=\{ w^j : j \in \N\}\cup\{w\} \not \ni 0$, then the right hand side of \eqref{eq:final_contra} tends to $- \infty$,  which is impossible. This contradiction shows that any Palais--Smale sequence $(u^j)_{j \in \N}$ of $J$ in $\mathcal{M}$ must be bounded. This finishes the proof in case \ref{as:f:pernt}. \\

\underline{Case \ref{as:f:locn:pert}}:
We proceed similarly as in case \ref{as:f:pernt}. First we show that for $\rho >0$ there exists a bounded set $\mathcal{Z}_0 \subset \Z$ such that  $\liminf_{j\in \N} \norm{v^{j,+}}_{L^{p+1+\rho}(\R; \ell^{p+1+\rho}(\mathcal{Z}_0))}>0$. To see this, we take an arbitrary $\varepsilon>0$ and estimate
\begin{align*}
    0 \leq J_1(u) \leq \varepsilon c \norm{u}_{\mathcal{H}}^2+ \int_\R \sum_{n \in \Z} h_{\varepsilon}(n,t) |(Bu)_n(t)|^{p+1+\rho}d(t) \der t
\end{align*}
for all $u \in \mathcal{H}$ where the function $h_\varepsilon$ with $h_\varepsilon(n,t) \to 0$ as $|n| \to \infty$ uniformly in $t$ arises from Lemma~\ref{lem:est_on_f}~\ref{lem:est_on_f:eps} (b) and $c$ stems from Theorem~\ref{theom:emb} and the boundedness of $B$.
For any $\tau>0$ we then obtain from the above estimate combined with Lemma~\ref{lem:vorfunkt}~\ref{lem:vorfunkt:iv} and \eqref{eq:normvj+} 
\begin{align}\label{eq:contrab}
    J(u^j) \geq J(\tau v^{j,+}) &=\frac{\tau^2}{2}\norm{v^{j,+}}_{\mathcal{H}}^2-J_1(\tau v^{j,+}) \nonumber \\
    &\geq \frac{\tau^2}{4}-\varepsilon c \tau^2 \norm{v^{j,+}}_{\mathcal{H}}^2 -|\tau|^{p+1+\rho} \int_{\R} \sum_{n \in \Z} h_{\varepsilon}(n,t)|(Bv^{j,+})_n(t)|^{p+1+\rho} d(t) \der t \nonumber \\
    &\geq \frac{\tau^2}{8} - |\tau|^{p+1+\rho} \int_{\R} \sum_{n \in \Z} h_{\varepsilon}(n,t)|(Bv^{j,+})_n(t)|^{p+1+\rho} d(t) \der t
\end{align}
for all $j \in \N$ and $\varepsilon$ sufficiently small.
Now, let us suppose for contradiction that $v^{j,+} \to 0$ in $L^{p+1+\rho}(\R; \ell^{p+1+\rho}(\mathcal{Z}))$ for every bounded $\mathcal{Z}\subset \Z$.
Together with the localization property of $h_\varepsilon$ this implies $\int_{\R} \sum_{n \in \Z} h_{\varepsilon}(n,t)|Bv^{j,+}|^{p+1+\rho} \der t \stackrel{j\to \infty}{\to} 0$.
This leads to a contradiction since for the Palais--Smale sequence $(u^j)_{j \in \N}$ the left hand side of \eqref{eq:contrab} is bounded whereas the right hand side becomes arbitrarily large for $|\tau|\to \infty$. Thus we have shown the existence of a bounded set $\mathcal{Z}_0\subset \Z$ such that $\liminf_{j\in \N} \norm{v^{j,+}}_{L^{p+1+\rho}(\R; \ell^{p+1+\rho}(\mathcal{Z}_0))}>0$.

Next, in view of Theorem~\ref{thm:cc}~\ref{lem:cct}, we obtain $\delta>0$ and a sequence of translations $(m^j)_{j\in \Z}$ such that (up to a subsequence) the sequence $w_n^j(t)\coloneqq v_n^j(t+m^jT)$ satisfies
\begin{align}\label{eq:shiftedseq}
     \int_{[0,T]} \sum_{n \in \mathcal{Z}_0} |w_n^{j,+}(t)|^2 \dt =  \int_{[m^jT, (m^j+1)T]} \sum_{n \in \mathcal{Z}_0} |v_n^{j,+}(t)|^2 \dt \geq \delta >0
\end{align}
for all $j \in \N$. Since the periodicity assumption \ref{as:f:locn:pert} implies $\norm{w^j}_{\mathcal{H}}=\norm{v^j}_{\mathcal{H}}$, there exists $w \in \mathcal{H}$ with $w^j \rightharpoonup w$ as $j \to \infty$ (up to a subsequence) from which we infer by local compactness (cf. Theorem~\ref{theom:emb}) and \eqref{eq:shiftedseq} that $w^+ \neq 0$ and hence $w \neq 0$. 
We finish the proof by arguing identically as for \eqref{eq:final_contra} in Case~\ref{as:f:pernt} to get a contradiction.
Therefore, any Palais--Smale sequence $(u^j)_{j\in \N}$ of $J$ in $\mathcal{M}$ must be bounded.  \\
\underline{Case \ref{as:f:loct:pern}:} One can proceed as in case \ref{as:f:locn:pert} with the roles of $n$ and $t$ exchanged, in particular by making use of the properties of the function $h_\varepsilon$ from Lemma~\ref{lem:est_on_f}~\ref{lem:est_on_f:eps} (c) combined with the concentration compactness argument from Theorem~\ref{thm:cc}~\ref{lem:ccn}.
\end{proof}

The following density lemma is obvious by approximating infinite series with finite sums and $H^1(\R)$ functions by $C_c^\infty(\R)$ functions. 
\begin{lemma}\label{rem:4.2}
The set $V_0 \coloneqq \{v=(v_n)_{n\in \Z} \text{ with } v_n\in C_c^\infty(\R) \text { and } v_n=0 \text{ for almost all } n\in \Z\}$ is dense in $\mathcal{H}$.
\end{lemma}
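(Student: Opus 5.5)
We take $v\in\mathcal{H}=H^1(\R;\ell^2(\Z))$ and $\varepsilon>0$, and approximate $v$ in the canonical norm \eqref{eq:norm}. By \eqref{eq:equinorm} this is equivalent to approximating in $\|\cdot\|_{\mathcal{H}}$, so density in one norm gives density in the other. The approximation is done in two steps: first truncate in $n$, then mollify and cut off in $t$ for each of the finitely many remaining components.

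\emph{Step 1 (truncation in $n$).} For $K\in\N$ we set $v^K_n\coloneqq v_n$ for $|n|\le K$ and $v^K_n\coloneqq0$ otherwise. Since
$$\|v\|^2=\sum_{n\in\Z}\|v_n\|_{H^1(\R)}^2<\infty,$$
which follows from Fubini/Tonelli applied to \eqref{eq:norm}, we obtain
$$\|v-v^K\|^2=\sum_{|n|>K}\|v_n\|_{H^1(\R)}^2\to0 \quad\text{as } K\to\infty.$$
We fix $K$ with $\|v-v^K\|<\varepsilon/2$. The only point needing care is that the components $v_n$ of a Bochner function are genuinely in $H^1(\R)$ with $\dot v_n=(\dot v)_n$. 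This holds because coordinate evaluation $\ell^2(\Z)\to\R$ is bounded and linear, so it commutes with the weak derivative.

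\emph{Step 2 (smoothing in $t$).} For each $|n|\le K$ we use the density of $C_c^\infty(\R)$ in $H^1(\R)$ to choose $\varphi_n\in C_c^\infty(\R)$ with
$$\|v_n-\varphi_n\|_{H^1(\R)}^2<\frac{\varepsilon^2}{4(2K+1)}.$$
We define $w_n\coloneqq\varphi_n$ for $|n|\le K$ and $w_n\coloneqq0$ otherwise. Then $w\in V_0$ and $\|v^K-w\|<\varepsilon/2$, so $\|v-w\|<\varepsilon$ by the triangle inequality. The membership $w\in\mathcal{H}$ is clear, since $w$ has only finitely many nonzero smooth, compactly supported components.

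There is no real obstacle in this argument; the only thing to check is the identification of the $\mathcal{H}$-norm with the $\ell^2$-sum of the componentwise $H^1(\R)$-norms, which was noted in Step 1.
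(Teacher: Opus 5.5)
Your proof is correct and follows the route the paper indicates: truncate the series in $n$, then approximate each of the finitely many remaining $H^1(\R)$ components by $C_c^\infty(\R)$ functions. You also supply details the paper leaves implicit, namely the Tonelli identity $\|v\|^2=\sum_{n\in\Z}\|v_n\|_{H^1(\R)}^2$, the fact that the components satisfy $\dot v_n=(\dot v)_n$, and the passage between the equivalent norms.
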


The main Theorem~\ref{thm:main} follows from the next result.

\begin{theorem}\label{thm:groundstate} The functional $J$ admits a ground state $u\in \mathcal{H}$. Moreover $u\in H^2(\R;\ell^2(\Z))\subset C^1_0(\R;\ell^2(\Z))$ and $u$ is a rogue wave solution of \eqref{eq:main}.
\end{theorem}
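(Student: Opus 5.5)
The proof splits into three parts: existence of a minimizer of $J$ on $\calM$, showing the minimizer is a critical point, and regularity.

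\emph{Existence of the ground state.} Lemma~\ref{lem:vorfunkt} verifies hypotheses (B1), (B2), (i), (ii) of \cite[Theorem 35]{szulkin_weth}.

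In case \ref{as:f:locnt}, Lemma~\ref{lem:complcont} gives (iii) of that theorem. Hence $c:=\inf_{\calM}J>0$ is attained, and the minimizer $u$ is a critical point of $J$.

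In the periodic cases \ref{as:f:pernt}, \ref{as:f:locn:pert}, \ref{as:f:loct:pern} I use the Szulkin--Weth reduction instead. The map $w\mapsto m(w)$ from Lemma~\ref{lem:vorfunkt}(iv), restricted to the unit sphere of $\mathcal{H}^+$, is a homeomorphism onto $\calM$. Ekeland's principle applied to $J\circ m$ then yields a Palais--Smale sequence $(u^j)\subset\calM$ for $J$ at level $c$. By Lemma~\ref{lem:PSbounded} this sequence is bounded.

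Nonvanishing comes from $J'(u^j)\to0$ and $c>0$. The identity $J'(u^j)u^j=0$ gives $\|u^{j,+}\|_{\mathcal H}^2\ge J_1'(u^j)u^j$. Combined with $J(u^j)\to c>0$ and Lemma~\ref{lem:est_on_f}, this forces $Bu^j\not\to0$ in $L^{p+1}(\R;\ell^{p+1}(\Z))$. In cases (C1)/(C2) one first localizes in the non-periodic variable using $h_\varepsilon$, exactly as in Lemma~\ref{lem:PSbounded}.

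Theorem~\ref{lem:ccot} or Theorem~\ref{thm:cc} then provides translations by multiples of $N$ and/or $T$ with local $L^2$-mass $\ge\delta$. Since $J$, $\calM$ and $\|\cdot\|_{\mathcal H}$ are invariant under these shifts (by periodicity and Lemma~\ref{lem:operatorcommute}), the shifted sequence is again a bounded PS sequence at level $c$. It converges weakly to some $u\neq0$ by local compactness (Theorem~\ref{theom:emb}).

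\emph{The weak limit is a ground state.} First, $u$ is a critical point: for test functions $\varphi\in V_0$ (Lemma~\ref{rem:4.2}) one has $J'(u^j)\varphi\to J'(u)\varphi$. This uses weak convergence for $J_0$, and Lemma~\ref{lem:est_on_f}(iv) on the bounded support of $\varphi$ for $J_1$. By density, $J'(u)=0$, so $u\in\calM$.

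Second, $J(u)\le c$. By Lemma~\ref{lem:est_on_f}(i) the integrand $\frac12 f(n,t,s)s-F(n,t,s)$ is nonnegative. Fatou's lemma on
\[
J(u^j)=J(u^j)-\tfrac12J'(u^j)u^j=\int_\R\sum_{n\in\Z}\Bigl(\tfrac12 f(n,t,(Bu^j)_n)(Bu^j)_n-F(n,t,(Bu^j)_n)\Bigr)d\der t
\]
gives $J(u)\le\liminf J(u^j)=c$, using pointwise a.e.\ convergence of $Bu^j$. Hence $J(u)=c$.

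The main obstacle is the nonvanishing step: ensuring that the shifted weak limit is nonzero when $f$ is only localized in one variable. This needs the $h_\varepsilon$-splitting combined with a concentration-compactness lemma in the other variable.

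\emph{Regularity.} From the weak formulation, $\dot u_n$ has weak derivative $d(t)\bigl((Mu)_n-(B^*\tilde f(Bu))_n\bigr)$. This right-hand side lies in $L^2(\R;\ell^2(\Z))$ because $d\in L^\infty$, $M$ and $B^*$ are bounded, and $\tilde f(Bu)\in L^2$ by Lemma~\ref{lem:est_on_f}(iv). Therefore $u\in H^2(\R;\ell^2(\Z))$.

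Applying Theorem~\ref{theom:emb} to both $u$ and $\dot u\in H^1(\R;\ell^2(\Z))$ gives $u\in C_0^1(\R;\ell^2(\Z))$. In particular $u_n(t)\to0$ as $|n|+|t|\to\infty$, since $\|u(t)\|_{\ell^2}\to0$ and each $u(t)\in\ell^2$ with uniform tails on compact time sets. So $u$ is a rogue wave.

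For $-f$, I replace $M$ and $d^{-1}\partial_t^2$ by their negatives, which swaps $\mathcal H^\pm$. The same argument then applies to $-J$.
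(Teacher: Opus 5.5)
Your proposal is correct and follows essentially the paper's proof. The shared steps are: a minimizing Palais--Smale sequence on $\mathcal{M}$ from the Szulkin--Weth reduction, boundedness via Lemma~\ref{lem:PSbounded}, concentration-compactness with periodic shifts, criticality of the weak limit via $V_0$ and density, Fatou's lemma for $J(u)\le c$, and $H^2$-regularity from the weak formulation. In that last step the weak derivative of $\dot u_n$ is actually $d\,\bigl((B^*\tilde f(Bu))_n-(Mu)_n\bigr)$, which is only a harmless sign slip.

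The one deviation is the nonvanishing step. You combine the Nehari identity $J(u^j)=\tfrac12J_1'(u^j)u^j-J_1(u^j)$ with $c>0$ and track $Bu^j$ directly, whereas the paper reruns the $J(u^j)\ge J(\tau u^{j,+})$ argument on $u^{j,+}$. Your version is slightly more direct and needs only $u\neq0$, since a nonzero critical point cannot lie in $\mathcal{H}^-$. It also avoids having to pass local vanishing of $u^{j,+}$ through $B$ in cases (C1)/(C2).
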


\begin{proof} For the existence of a ground state we use Theorem 35 in \cite{szulkin_weth}. 
By Lemma~\ref{lem:vorfunkt} the conditions $(B_1), (B_2), (i)$ and $(ii)$ in Theorem 35 are satisfied. 
If we assume case \ref{as:f:locnt}, then also condition $(iii)$ in Theorem 35 holds by Lemma~\ref{lem:complcont} and we directly obtain  a ground state of $J$ as a minimizer of $J|_\mathcal{M}$.

In cases \ref{as:f:pernt}, \ref{as:f:locn:pert} and \ref{as:f:loct:pern} condition $(iii)$ in Theorem 35 does not hold. Nevertheless, an inspection of the proof of Theorem~35 shows that the assumptions $(B_1), (B_2), (B_3)$ and $(i)$ still provide a minimizing Palais--Smale sequence and moreover that $(B_3)$ in fact follows from $(B_1)$, $(B_2)$, $(i)$ and $(ii)$. Therefore, as observed at the beginning of the proof, in our setting Lemma~\ref{lem:vorfunkt} is sufficient for the existence of a  minimizing Palais--Smale sequence of $J$ in $\calM$. As we shall see next, this still leads to a critical point of $J$. 

The following arguments are partly based on the proof of Theorem 5.6 in \cite{hr}. Let $(u^j)_{j \in \N}$ be a minimizing  Palais--Smale sequence of $J$ in $\calM$. Lemma~\ref{lem:PSbounded} guarantees that $(u^j)_{j \in \N}$ is bounded and thus there exist $u \in \mathcal{H}$  and a subsequence (again denoted by $(u^j)_{j \in \N}$) such that $u^j \rightharpoonup u$ in $\mathcal{H}$ and $J_0'(u^j)[v] \to J_0'(u)[v]$ as $j \to \infty$ for all $v \in \mathcal{H}$.
Further, by the local compactness of the Nemytskii operator from Lemma~\ref{lem:est_on_f} (iv) we have for all compact support functions $v\in V_0$
\begin{align*}
    J_1'(u^j)[v]=\sum_{n \in \Z} \int_\R B^*f(n,t,(Bu^j)_n(t)) v_n(t) d(t) \dt \to J_1'(u)[v] 
\end{align*} 
as $j \to \infty$.
By Lemma~\ref{rem:4.2} this is sufficient to conclude that $u$ is a critical point of $J$.

For the rest of the proof we distinguish between the cases \ref{as:f:pernt}, \ref{as:f:locn:pert} and \ref{as:f:loct:pern}.

\underline{Case \ref{as:f:pernt}}: Using that $(u^j)_{j\in \N}$ is bounded we can follow the proof of Lemma~\ref{lem:PSbounded}  (where $u^j$ takes the role of $v^j$) and obtain $u^{j,+} \not \to 0$ in $ L^{p+1}(\R;\ell^{p+1}(\Z))$ as $j \to \infty$. Hence, using Theorem~\ref{lem:ccot}, we find $\delta>0$, a sequence of translations $(m_1^j,m_2^j)_{j \in \N} \in \Z^2$ and a subsequence of $u^{j}$ (again denoted by $u^j$) such that
\begin{align} \label{eq:l2_lowerbound}
    \sum_{n=m_1^j N}^{(m_1^j+1)N-1} \int_{[m_2^j T, (m_2^j+1)T]} |u_n^{j,+}(t)|^{2} \dt \geq \delta >0
\end{align}
for all $j \in \N$.
If we define $v_n^j(t)\coloneqq  u^j_{n+m_1^j N}(t+m_2^j T)$ then the periodicity assumption and Lemma~\ref{lem:operatorcommute} allow to conclude $\norm{u^j}_\mathcal{H}=\norm{v^j}_\mathcal{H}$ and $J(u^j)=J(v^j)$ for all $j \in \N$ as well as $J'(v^j)\stackrel{j\to\infty}{\to} 0$. 
In particular, $(v^j)_{j \in \N}$ is again a bounded minimizing Palais--Smale sequence for $J$ so that there exists $v \in \mathcal{H}$ with $v^j \rightharpoonup v$ in $\mathcal{H}$ (up to a subsequence).
By the locally compact $L^2$-embedding from Theorem~\ref{theom:emb}, \eqref{eq:l2_lowerbound} yields $v^+\not = 0$. From the observation at the beginning of this proof we get $J'(v)=0$ and therefore $v \in \mathcal{M}$.

Lastly, we show that $v$ minimizes $J$ on $\calM$.
Since $v \in \calM$ we obviously have $J(v) \geq \inf_\calM J$.
For the reverse inequality, we use that $v^j \to v$ pointwise a.e. (for a subsequence) and $\frac{1}{2}f(n,t,s)s-F(n,t,s)\geq 0$ from Lemma~\ref{lem:est_on_f} (i) together with Fatou's Lemma 
\begin{align*}
\inf_\calM J &= \lim_{j \to \infty} J(v^j) - \frac{1}{2} J'(v^j)[v^j]=  \lim_{j \to \infty} \sum_{n \in \Z} \int_{\R} \frac{1}{2} f(n,t,(Bv^j)_n(t))(Bv^j)_n(t) -F(n,t,(Bv^j)_n(t)) \dt \\
&\geq \sum_{n \in \Z} \int_{\R} \frac{1}{2} f(n,t,(Bv)_n(t))(Bv)_n(t) -F(n,t,(Bv)_n(t)) \dt  = J(v) - \frac{1}{2} J'(v)[v]=J(v) .
\end{align*}

\underline{Case \ref{as:f:locn:pert}}: We proceed similarly as in case \ref{as:f:pernt}. Since $(u^j)_{j\in \N}$ is bounded, following the lines of the proof of Lemma~\ref{lem:PSbounded}, we see that for $\rho>0$ there exists a bounded set $\mathcal{Z}_0 \subset \Z$ such that $u^{j,+} \not \to 0$ in $L^{p+1+\rho}(\R;\ell^{p+1+\rho}(\mathcal{Z}_0))$.
By Theorem~\ref{thm:cc}~\ref{lem:cct} we obtain a $\delta>0$ and a sequence of translations $(m^j)_{j\in \N}$ such that (up to a subsequence) the sequence $v_n^j(t)\coloneqq u_j^j(t+m^jT)$ satisfies
\begin{align}\label{eq:l_2_lowerboundB}
    \int_{[0,T]}\sum_{n\in\mathcal{Z}_0} |v_n^{j,+}(t)|^2 \der t = \int_{[m^jT,(m^j+1)T]} \sum_{n \in \mathcal{Z}_0}|u_n^{j,+}(t)|^2 \der t \geq \delta >0
\end{align}
for all $j \in \N$.
By the periodicity assumption, the sequence $(v^j)_{j\in\N}$ is again a bounded Palais--Smale sequence for $J$. Hence, there exists $v \in \mathcal{H}$ such that $v^j \rightharpoonup v$ (up to a subsequence) as $j \to \infty$.  From \eqref{eq:l_2_lowerboundB} and the locally compact $L^2$ embedding (Theorem~\ref{theom:emb}) we get $v^+\neq 0$. Since $J'(v)=0$ as observed at the beginning of the proof, $v \in \mathcal{M}$.
The fact that $v$ minimizes $J$ on $\mathcal{M}$ follows exactly as in case \ref{as:f:pernt}.

\underline{Case \ref{as:f:loct:pern}:} We can proceed as in case \ref{as:f:locn:pert} with the roles of $n$ and $t$ exchanged and by making use of the concentration compactness argument from Theorem~\ref{thm:cc}~\ref{lem:ccn}. 

\medskip
In all cases \ref{as:f:locnt}, \ref{as:f:pernt}, \ref{as:f:locn:pert}, \ref{as:f:loct:pern} we found a critical point $u$ of $J$ as a ground state and hence a weak rogue wave solution of \eqref{eq:main} in the sense of Definition~\ref{def:weak_sol}. Let us check that $-Mud(\cdot)+ B^\ast\tilde f(Bu)d(\cdot)\in L^2(\R;\ell^2(\Z))$. First, $Mud(\cdot) \in L^2(\R;\ell^2(\Z))$ by boundedness of $M:L^2(\R;\ell^2(\Z))\to L^2(\R;\ell^2(\Z))$. Second, $B^\ast\tilde f(Bu)d(\cdot)\in L^2(\R;\ell^2(\Z))$ by boundedness of $B:\mathcal{H}\to \mathcal{H}$, $B^\ast:L^2(\R;\ell^2(\Z))\to L^2(\R;\ell^2(\Z))$ and the mapping property $\tilde f: \mathcal{H}\to L^2(\R;\ell^2(\Z))$ of the Nemytskii-operator from Lemma~\ref{lem:est_on_f}(iv). Hence, the definition of a weak solution implies that $u\in H^2(\R;\ell^2(\Z))$. Finally, the property $H^2(\R;\ell^2(\Z))\subset C^1_0(\R;\ell^2(\Z))$ can be derived from Theorem~\ref{theom:emb}.

Lastly, replacing $f$ by $-f$ in \eqref{eq:main} is equivalent to study the equation $-Lu=B^\ast f^\sharp(t,Bu)$ and to consider the functional $J=-J_0-J_1$. Since $L$ is indefinite we can proceed with the same variational arguments with the roles of $\mathcal{H}^+$ and $\mathcal{H}^-$ interchanged. The norm $\norm{\cdot}_{\mathcal{H}}$ does not change since it only depends on $|L|$.
\end{proof}

\begin{proof}[Proof of Corrollary~\ref{cor:breathers}]
Let $d$ and $f$ be $T${\color{blue}-}periodic in time. We consider \eqref{eq:main} on the space-time domain $\Z \times \T_{kT}$ where $\T_{kT}$ is the flat torus with period length $kT$ for some $k \in \N$. If we now choose $\mathcal{H}$ as $H^1(\T_{kT};\ell^2(\Z))$ then one can directly adapt the variational setting to $kT$-time periodic functions. In this case the variational method from \cite{szulkin_weth} applies in exactly the same way as before and shows the existence of nontrivial $kT$-time periodic solutions. The assumption analogous to \ref{as:specgap} on the spectrum of $L$, which is needed now, reads as $\sigma\left( M \right) \cap \sigma\left( -\frac{1}{d(t)} \dtsquare \right)_{L^2(\T_{kT})}=\emptyset$ and it is true for any $k\in \N$ since $\sigma\left( -\frac{1}{d(t)} \dtsquare\right)_{L^2(\T_{kT})}\subset \sigma\left( -\frac{1}{d(t)} \dtsquare\right)_{L^2(\R)}$ by Floquet-Bloch theory, see \cite{Eastham}.
\end{proof}

	\appendix
\section{Examples for $M$ and $d$}\label{sec:examples}
In this section we provide examples for the operator $M$ and the function $d$ and show that they satisfy the assumptions  \ref{as:M}--\ref{as:specgap}.
We begin with an example for the operator $M$.
 \begin{lemma}\label{lem:spec_disc_lap_q}
Let $M=- \Delta_1 +q$ with $q\in \ell^\infty(\Z)$, $q\geq 0$. Then $M$ satisfies assumption \ref{as:M} and $\sigma(M)\subset[\inf_\Z q, \sup_\Z q +4]$.
\end{lemma}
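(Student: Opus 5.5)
The plan is to verify the properties in \ref{as:M} one at a time directly from the definition, and then bound the spectrum via the quadratic form.

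\emph{Boundedness and linearity.} Write $M=-\Delta_1+q$ with $q$ acting as a multiplication operator. Linearity is clear. Boundedness follows from the triangle inequality: the shifts $S_{\pm1}$ are isometries of $\ell^2(\Z)$, so
\[
\norm{\Delta_1 u}_{\ell^2}\le 4\norm{u}_{\ell^2}, \qquad \norm{qu}_{\ell^2}\le \norm{q}_{\ell^\infty}\norm{u}_{\ell^2}.
\]

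\emph{Self-adjointness.} For $u,v\in\ell^2(\Z)$, reindexing the sums gives $\langle S_1u,v\rangle=\langle u,S_{-1}v\rangle$. Hence $S_1^*=S_{-1}$, and therefore $\Delta_1=S_1+S_{-1}-2\Id$ is self-adjoint. Multiplication by the real bounded sequence $q$ is self-adjoint as well, so $M$ is a bounded self-adjoint operator.

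\emph{Positivity and spectral bounds.} The main computation is the summation-by-parts identity
\[
\langle -\Delta_1 u,u\rangle_{\ell^2}=\sum_{n\in\Z}(u_{n+1}-u_n)^2,
\]
which is justified by absolute convergence of all sums for $u\in\ell^2(\Z)$. It gives $\langle -\Delta_1u,u\rangle\ge 0$. For the upper bound, $(a-b)^2\le 2a^2+2b^2$ yields $\langle -\Delta_1u,u\rangle\le 4\norm{u}_{\ell^2}^2$. Adding $\langle qu,u\rangle=\sum_n q_nu_n^2$ then gives
\[
\inf_\Z q\,\norm{u}_{\ell^2}^2\le\langle Mu,u\rangle\le(\sup_\Z q+4)\norm{u}_{\ell^2}^2 .
\]
For a bounded self-adjoint operator the spectrum lies in $[\inf W,\sup W]$ of its numerical range $W$. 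This yields $\sigma(M)\subset[\inf_\Z q,\sup_\Z q+4]$, and in particular $\langle Mu,u\rangle\ge0$, i.e., $M$ is positive.

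\emph{Interpretation of ``positive''.} If \ref{as:M} is meant as strict positivity, namely $\sigma(M)\subset(0,\infty)$ as used for $|L|$, this follows whenever $\inf_\Z q>0$, which is the situation used later (cf.\ Corollary~\ref{cor:zitierbar}). Otherwise nonnegativity of the form is what is established. There is no real obstacle here; the only care needed is justifying the summation by parts, which is immediate from absolute convergence of the sums for $u\in\ell^2(\Z)$.
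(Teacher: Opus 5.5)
Your proposal is correct and follows essentially the paper's route. Both arguments bound the quadratic form and use that $\min\sigma(M)$ and $\max\sigma(M)$ equal the infimum and supremum of $\langle u, Mu\rangle$ over unit vectors. The one difference is that you derive $0\le\langle -\Delta_1u,u\rangle\le 4\norm{u}_{\ell^2}^2$ yourself by summation by parts, whereas the paper obtains it from the cited fact $\sigma(-\Delta_1)=[0,4]$. You also check boundedness and self-adjointness explicitly, which the paper leaves implicit.
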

 \begin{proof}
From \cite[Chapter 4.1.4]{borht} we know $\sigma(-\Delta_1)=[0,4]$. Since for any linear, bounded and self-adjoint operator $A$ on a Hilbert space $H$ we have from \cite[Theorem 2.19]{teschl} that
$$
\min \sigma(A)=\inf_{\|\varphi\|_H=1}\langle \varphi, A \varphi \rangle_H , \qquad \max \sigma(A)=\sup_{ 
           \| \varphi\|_H=1}\langle \varphi, A\varphi \rangle_H,
$$
we see with $H=\ell^2(\Z)$ that $\sigma(-\Delta_1 +q) \subset [\inf_\Z q, \sup_\Z q +4]$.
\end{proof}

We continue with an example for the operator $-\frac{1}{d(t)}\frac{\der^2}{\der t^2}$ and a result about its spectral properties.

\begin{lemma}\label{lem:exd}
Let $d$ be a periodic two-step potential given by
\begin{align*}
    d(t)=\begin{cases}
        a, & t \in [0,T\theta), \\
        b, & t \in [T \theta,T), \\
        d(t+T), & t \in \R
    \end{cases}
\end{align*}
where $a,b,T>0$, $a \neq b$, $\theta \in (0,1)$ and $\sqrt{a}\theta=\sqrt{b}(1-\theta)$. Then, $d$ satisfies assumption \ref{as:dbdd}, 
\begin{align*}
 \sigma\left(-\frac{1}{d(t)} \frac{\der^2}{\der t^2}\right)&= \left\{ \nu_k(l): k \in \Z, l \in \left[-\frac{\pi}{T}, \frac{\pi}{T}\right] \right\} 
 =\left[\nu_0(0),\nu_0\left(\frac{\pi}{T}\right) \right] \dot \cup \dot \bigcup_{k \in \N} \left[\nu_{-k}\left(\frac{\pi}{T}\right),\nu_k\left(\frac{\pi}{T}\right) \right]
\end{align*}
and the spectral gaps are located in 
\begin{align*}
    \dot \bigcup_{k \in \N_0} \left( \nu_k \left(\frac{\pi}{T}\right), \nu_{-(k+1)} \left(\frac{\pi}{T}\right)\right)
\end{align*}  
where
\begin{align*}
    \nu_k(l)= \left(\frac{\arccos\Bigl(1+4\theta(1-\theta)(\cos(lT)-1)\Bigr)+2\pi k}{2T\sqrt{a}\theta} \right)^2 .
\end{align*} 
\end{lemma}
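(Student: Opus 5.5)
The plan is a Floquet--Bloch analysis of the periodic Sturm--Liouville problem $-\ddot u = \lambda d(t) u$, which is exactly the eigenvalue equation of $-\frac{1}{d(t)}\frac{\der^2}{\der t^2}$ on $L^2_d(\R)$. Assumption \ref{as:dbdd} is immediate, since $d$ takes only the two values $a,b>0$. For the spectrum, standard theory (cf.\ \cite{Eastham}) says that $\sigma$ equals the set of $\lambda\ge 0$ with $|D(\lambda)|\le 2$. Here $D(\lambda)$ is the discriminant, i.e.\ the trace of the monodromy matrix over one period $[0,T]$. Equivalently, $\sigma=\{\lambda:\ D(\lambda)=2\cos(lT)\text{ for some } l\in[-\pi/T,\pi/T]\}$. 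So the work reduces to computing $D$ explicitly and inverting it.

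First I compute the monodromy matrix. On $[0,T\theta)$ we have $-\ddot u=\lambda a u$, so the transfer matrix acting on $(u,\dot u)$ is the rotation-type matrix with angle $\alpha=\sqrt{\lambda a}\,T\theta$. On $[T\theta,T)$ we get the analogous matrix with angle $\beta=\sqrt{\lambda b}\,T(1-\theta)$. Continuity of $u$ and $\dot u$ holds across the jumps, since the equation is $-\ddot u=\lambda d u$ with $d$ merely bounded. Multiplying the two matrices gives the classical formula
\[
D(\lambda)=2\cos\alpha\cos\beta-\Bigl(\sqrt{\tfrac{a}{b}}+\sqrt{\tfrac{b}{a}}\Bigr)\sin\alpha\sin\beta .
\]
The hypothesis $\sqrt a\theta=\sqrt b(1-\theta)$ gives $\alpha=\beta=:\phi=\sqrt\lambda\,T\sqrt a\,\theta$. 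Hence $D=2\cos^2\phi-(r+r^{-1})\sin^2\phi$ with $r=\sqrt{a/b}$.

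Next I simplify the coefficient. From $r=(1-\theta)/\theta$ we get $r+r^{-1}=\frac{(1-\theta)^2+\theta^2}{\theta(1-\theta)}$. Substituting this and using $\cos^2\phi=\tfrac12(1+\cos 2\phi)$ and $\sin^2\phi=\tfrac12(1-\cos2\phi)$ gives
\[
\tfrac12 D=\frac{\cos 2\phi -(1-2\theta)^2}{4\theta(1-\theta)} .
\]
This should be double-checked; at $\cos2\phi=1$ it gives $D=2$, which is consistent. The condition $\tfrac12D=\cos(lT)$ is then $\cos(2\phi)=1+4\theta(1-\theta)(\cos(lT)-1)$, whose right-hand side lies in $[(1-2\theta)^2,1]\subset[-1,1]$. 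Solving yields $2\phi=\pm\arccos(\cdot)+2\pi k$, so $\lambda=\bigl(\frac{\pm\arccos(\cdot)+2\pi k}{2T\sqrt a\theta}\bigr)^2$. Absorbing the sign into $k\in\Z$, the same $\lambda$ arises from $-\arccos+2\pi k$ as from $\arccos-2\pi k$ after squaring, and this gives the set $\{\nu_k(l)\}$.

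Finally I describe the bands. As $l$ runs over $[0,\pi/T]$, the quantity $\arccos(\cdot)$ increases continuously from $0$ to $\gamma:=\arccos(1-8\theta(1-\theta))\in(0,\pi)$. The inclusion $\gamma<\pi$ holds precisely because $a\neq b$ forces $\theta\neq\tfrac12$. The band for $k=0$ is $[\nu_0(0),\nu_0(\pi/T)]=[0,\nu_0(\pi/T)]$. For $k\in\N$ the bases $2\pi k\pm\arccos$ together fill $[2\pi k-\gamma,2\pi k+\gamma]$, which gives the band $[\nu_{-k}(\pi/T),\nu_k(\pi/T)]$. Here $\nu_{-k}$ is read as $(2\pi k-\arccos)^2/(\cdots)^2$, i.e.\ I identify it by its absolute value inside the square. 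Because $\gamma<\pi$, consecutive intervals $[2\pi k-\gamma,2\pi k+\gamma]$ are disjoint. This yields the disjoint band union and gaps $(\nu_k(\pi/T),\nu_{-(k+1)}(\pi/T))$.

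The main obstacle is the trigonometric reduction of $D$ and getting signs right in the monodromy product, together with the bookkeeping of the index $k$ of negative sign inside the square. Everything else is a routine appeal to Floquet theory for the weighted problem.
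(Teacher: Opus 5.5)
Your proposal is correct and follows essentially the paper's route. Both arguments use Floquet--Bloch theory to reach the dispersion relation $\cos(2\phi)=1+4\theta(1-\theta)(\cos(lT)-1)$; you derive it explicitly from the monodromy trace, whereas the paper only states it. Your disjointness condition $\gamma<\pi$ (equivalently $\theta\neq\frac12$) is the same as the paper's positive gap lengths $\pi(2k+1)(\pi-\gamma)/(aT^2\theta^2)$.

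One small slip: the range of $1+4\theta(1-\theta)(\cos(lT)-1)$ is $[1-8\theta(1-\theta),1]$, not $[(1-2\theta)^2,1]$; the value $(1-2\theta)^2$ corresponds to $\cos(lT)=0$. Since you afterwards use the correct endpoint $\gamma=\arccos(1-8\theta(1-\theta))$, nothing breaks.
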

\begin{proof}
Clearly, by definition, $d$ satisfies assumption \ref{as:dbdd}.
It follows from Floquet--Bloch theory, see \cite{Eastham}, that the spectrum can be calculated explicitly by solving the following semi-periodic eigenvalue problem on $[0,T]$
    \begin{align*}
        -\ddot \psi(t)&=\nu d(t) \psi(t), \quad t \in (0,T) ,\\
        \psi(T)&=\ee^{\ii T l } \psi(0), \quad l \in \left[-\frac{\pi}{T}, \frac{\pi}{T}\right]
    \end{align*}
    which leads to 
    \begin{align*}
 \sigma\left(-\frac{1}{d(t)} \frac{\der^2}{\der t^2}\right)&= \left\{ \nu_k(l): k \in \Z, l \in \left[-\frac{\pi}{T}, \frac{\pi}{T}\right] \right\} 
 \end{align*}
 with $\nu_k(l)$ as given in the statement.
Next, we collect some observations on $\nu_k(l)$: For all $k \in \Z$, $\nu_k(l)$ is continuous and even in $l$ so that it suffices to consider $l\in\left[ 0, \frac{\pi}{T}\right]$. For such $l$ the values $\nu_0(l)$ and $\nu_k(l)$ are monotonically increasing in $l$ for all $k \in \N$ and $\nu_{-k}(l)$ is monotonically decreasing in $l$ for all $k \in \N$.
This implies for all $k \in \N$ that
\begin{align*}
    0=\nu_0(0) < \nu_0 \left(\frac{\pi}{T}\right) \quad \mbox{ and } \nu_{-k}\left(\frac{\pi}{T}\right) < \nu_{-k}(0)=\nu_k(0) < \nu_k\left(\frac{\pi}{T}\right).
\end{align*}
Therefore the gap lengths given by
\begin{align*}
\nu_{-(k+1)} \left(\frac{\pi}{T}\right) - \nu_k \left(\frac{\pi}{T}\right)&=\pi (2k+1)  \frac{\pi- \arccos(1-8 \theta(1-\theta))}{a T^2 \theta^2}, \quad k\in \N_0
\end{align*}
are all positive and hence  
\begin{align*}
     \sigma\left(-\frac{1}{d(t)} \frac{\der^2}{\der t^2}\right)=\left[\nu_0(0),\nu_0\left(\frac{\pi}{T}\right) \right] \dot \cup \dot\bigcup_{k \in \N} \left[\nu_{-k}\left(\frac{\pi}{T}\right),\nu_k\left(\frac{\pi}{T}\right) \right].
\end{align*}
\end{proof}

\begin{corollary} \label{cor:zitierbar} With $d$ as in Lemma~\ref{lem:exd} and $q\in \ell^\infty(\Z)$ the assumption~\ref{as:specgap} is fulfilled provided $\theta=\frac{\sqrt{b}}{\sqrt{a}+\sqrt{b}}\leq  \frac{2-\sqrt{3}}{4}$ and there is some $k\in\N_0$ such that 
$$
\left(\frac{\pi}{T\sqrt{a}\theta}\right)^2 \left(k+\frac16\right)^2< \inf_\Z q \leq  \sup_\Z q < \left(\frac{\pi}{T\sqrt{a}\theta}\right)^2 \left(k+\frac56\right)^2-4.
$$
For sufficiently large $k$ this inequality can be fulfilled by a suitable choice of $q\in \ell^\infty(\Z)$ and if additionally  $T < \frac{\pi}{\theta\sqrt{6a}}$ then it can be fulfilled for every $k\in \N_0$. 
\end{corollary}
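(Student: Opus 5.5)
The plan is to combine Lemma~\ref{lem:spec_disc_lap_q}, which locates $\sigma(M)$, with the explicit gap description of Lemma~\ref{lem:exd}. We show that the interval $[\inf_\Z q,\sup_\Z q+4]$ lies inside one of the spectral gaps of $-\frac{1}{d(t)}\dtsquare$, which gives \ref{as:specgap}. First, the constraint $\sqrt{a}\theta=\sqrt{b}(1-\theta)$ is equivalent to $\theta=\frac{\sqrt b}{\sqrt a+\sqrt b}$. The hypothesis forces $\inf_\Z q>0$, so $q\ge 0$ and Lemma~\ref{lem:spec_disc_lap_q} applies: $M$ satisfies \ref{as:M} and $\sigma(M)\subset[\inf_\Z q,\sup_\Z q+4]$.

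Next we evaluate the gap endpoints at $l=\pi/T$. Set $\alpha\coloneqq\arccos\bigl(1-8\theta(1-\theta)\bigr)\in(0,\pi)$ and $c\coloneqq\frac{\pi}{T\sqrt a\theta}$. The formula for $\nu_k$ in Lemma~\ref{lem:exd} gives
\begin{align*}
\nu_k\left(\frac{\pi}{T}\right)=c^2\Bigl(k+\frac{\alpha}{2\pi}\Bigr)^2,\qquad \nu_{-(k+1)}\left(\frac{\pi}{T}\right)=c^2\Bigl(k+1-\frac{\alpha}{2\pi}\Bigr)^2,
\end{align*}
where the second identity uses $(\alpha-2\pi(k+1))^2=(2\pi(k+1)-\alpha)^2$. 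So the $k$-th gap is the open interval $\bigl(c^2(k+\frac{\alpha}{2\pi})^2,\,c^2(k+1-\frac{\alpha}{2\pi})^2\bigr)$. It contains $\bigl(c^2(k+\frac16)^2,\,c^2(k+\frac56)^2\bigr)$ as soon as $\alpha\le\pi/3$. Since $\arccos$ is decreasing, $\alpha\le\pi/3$ is equivalent to $1-8\theta(1-\theta)\ge\frac12$, i.e. $\theta(1-\theta)\le\frac1{16}$. For $\theta\in(0,\frac12)$ this holds exactly when $\theta\le\frac{1-\sqrt{3}/2}{2}=\frac{2-\sqrt3}{4}$, which is the stated condition.

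Under the displayed inequality we then have
\begin{align*}
\sigma(M)\subset[\inf_\Z q,\sup_\Z q+4]\subset\Bigl(c^2\bigl(k+\tfrac16\bigr)^2,\;c^2\bigl(k+\tfrac56\bigr)^2\Bigr),
\end{align*}
and this interval lies in a gap of $\sigma(-\frac1{d(t)}\dtsquare)$. Hence the two spectra are disjoint, which is \ref{as:specgap}.

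For solvability we choose $q$ constant. Any constant $q$ strictly between $c^2(k+\frac16)^2$ and $c^2(k+\frac56)^2-4$ works, and such a $q$ exists if and only if
\begin{align*}
c^2\Bigl(\bigl(k+\tfrac56\bigr)^2-\bigl(k+\tfrac16\bigr)^2\Bigr)=\tfrac23 c^2(2k+1)>4.
\end{align*}
This holds for all sufficiently large $k$. For every $k\in\N_0$ it holds as soon as the case $k=0$ does, i.e. $c^2>6$, which is equivalent to $T<\frac{\pi}{\theta\sqrt{6a}}$.

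The only delicate point is to identify the gap endpoints correctly from Lemma~\ref{lem:exd}, in particular the sign handling in $\nu_{-(k+1)}$, and to check that the threshold $\alpha\le\pi/3$ matches the bound $\frac{2-\sqrt3}{4}$ on $\theta$. Everything else is elementary.
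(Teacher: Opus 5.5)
Your proposal is correct and follows the paper's proof. You evaluate $\nu_k(\pi/T)$ and $\nu_{-(k+1)}(\pi/T)$ from Lemma~\ref{lem:exd}, use $\theta\le\frac{2-\sqrt3}{4}$ to get $\arccos(1-8\theta(1-\theta))\le\pi/3$, and then place $[\inf_\Z q,\sup_\Z q+4]\supset\sigma(M)$ (via Lemma~\ref{lem:spec_disc_lap_q}, whose hypothesis $q\ge 0$ you rightly note is forced by the displayed inequality) inside the resulting gap; your solvability check through the length $\frac23 c^2(2k+1)>4$ is a slightly more precise version of the paper's closing remark about gap lengths.
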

\begin{proof} For $k\in \N_0$ we have $\nu_k(\frac{\pi}{T})= \left(\frac{\arccos\bigl(1-8\theta(1-\theta)\bigr)+2\pi k}{2T\sqrt{a}\theta} \right)^2$ and by the assumption on $\theta$ we have $1-8\theta(1-\theta)\in [\frac12,1)$ so that $\arccos\bigl(1-8\theta(1-\theta)\bigr)\in (0, \frac{\pi}{3}]$. 
Therefore, for $k\in \N_0$
$$
    \nu_k\left(\frac{\pi}{T}\right)  \leq \left(\frac{\pi}{T\sqrt{a}\theta}\right)^2 \left(k+\frac16\right)^2, \quad \nu_{-(k+1)}\left(\frac{\pi}{T}\right) \geq \left(\frac{\pi}{T\sqrt{a}\theta}\right)^2 \left(k+\frac56\right)^2.
$$
If the assumption of the corollary is fulfilled then $[\inf_\Z q, \sup_\Z q+4]\subset \left(\nu_k\left(\frac{\pi}{T}\right),\nu_{-(k+1)}\left(\frac{\pi}{T}\right)\right)$ and thus assumption~\ref{as:specgap} is fulfilled due to Lemma~\ref{lem:spec_disc_lap_q}. Moreover, the gap lengths $\nu_{-(k+1)}\left(\frac{\pi}{T}\right)-\nu_k\left(\frac{\pi}{T}\right)\to \infty$ monotonically as $k\to \infty$. The smallest gap length, which occurs for $k=0$, is larger than $4$ provided $\frac{\pi}{T\sqrt{a}\theta}> \sqrt{6}$.
\end{proof}

    \section*{Acknowledgments}
	Funded by the Deutsche Forschungsgemeinschaft (DFG, German Research Foundation) – Project-ID 258734477 – SFB 1173. The authors thank Robert Wegner (KIT) for a fruitful discussion on the Loewner ordering and Sebastian Ohrem (KIT) for valuable feedback.

	\printbibliography

\end{document}